\newcommand{\dbar}{\,\mathchar'26\mkern-12mu d \hspace{0.06em}}
\theoremstyle{plain}
\newtheorem{thm}{Theorem}[section]
\newtheorem{lem}[thm]{Lemma}
\newtheorem{prop}[thm]{Proposition}
\theoremstyle{definition}
\newtheorem{defn}[thm]{Definition}
\theoremstyle{remark}
\setlist[enumerate,1]{leftmargin=2em}
\def\F{\mathbb F}
\def\Z{\mathbb Z}
\def\U{U_q(\mathfrak{sl}_2)}
\title[Finite-dimensional irreducible $\triangle_q$-modules at roots of $1$]{Finite-dimensional irreducible modules of the universal Askey--Wilson algebra at roots of unity}
\author{Hau-Wen Huang}
\address{
Department of Mathematics\\
National Central University\\
Chung-Li 32001 Taiwan
}
\email{hauwenh@math.ncu.edu.tw}
\begin{document}

\begin{abstract}
Let $\F$ denote an algebraically closed field and assume that $q\in \F$ is a primitive $d^{\rm \, th}$ root of unity with $d\not=1,2,4$. The universal Askey--Wilson algebra $\triangle_q$ is a unital associative $\F$-algebra defined by generators and relations. The generators are $A,B,C$ and the relations assert that each of 
\begin{gather*}
A+\frac{qBC-q^{-1}CB}{q^2-q^{-2}},
\qquad 
B+\frac{qCA-q^{-1}AC}{q^2-q^{-2}},
\qquad 
C+\frac{qAB-q^{-1}BA}{q^2-q^{-2}}
\qquad 
\end{gather*}
commutes with $A,B,C$. We show that every finite-dimensional irreducible $\triangle_q$-module is of dimension less than or equal to 
$$
\left\{
\begin{array}{ll}
d 
\quad &\hbox{if $d$ is odd};
\\
d/2
\quad &\hbox{if $d$ is even}.
\end{array}
\right.
$$
Moreover we provide an example to show that the bound is tight. 
\end{abstract}

\maketitle

{\footnotesize{\bf Keywords:} Askey--Wilson algebras, Chebyshev polynomials, $q$-Racah sequences.}

{\footnotesize{\bf MSC2010:} 33D45, 81R05.}

\section{Introduction}
Suppose that $\F$ is an algebraically closed field. 
In \cite{hidden_sym} A. Zhedanov introduced the Askey--Wilson algebras to link the Askey--Wilson polynomials \cite{ask85} and  representation theory. The algebras are  unital associative $\F$-algebras defined by generators and relations. The relations are slightly complicated, which involve a nonzero parameter $q$ and five additional parameters. To resolve the objection, P. Terwilliger proposed a central extension of the main confluence of the Askey--Wilson algebras defined as follows:

\begin{defn}
[Definition 1.2, \cite{uaw2011}]
\label{defn:AW}
Let $q$ denote a nonzero scalar in $\F$ with $q^4\not=1$. The {\it universal Askey--Wilson algebra} $\triangle_q$ is a unital associative $\F$-algebra generated by $A,B,C$ and the relations assert that each of 
\begin{gather}\label{alph&beta&gam}
A+\frac{qBC-q^{-1}CB}{q^2-q^{-2}},
\qquad 
B+\frac{qCA-q^{-1}AC}{q^2-q^{-2}},
\qquad 
C+\frac{qAB-q^{-1}BA}{q^2-q^{-2}}
\qquad 
\end{gather}
is central in $\triangle_q$.
\end{defn} 
The Askey--Wilson algebras and the universal Askey--Wilson algebra $\triangle_q$ are related to the integrable systems  \cite{zhedanov2008,Aneva2008}, the quantum harmonic oscillator \cite{Higgs2019,zhedanov2017},  the quantum algebra $\U$ \cite{Huang:RW,uaw&equit2011,Huang:CG,zhedanov1992,zhedanov1993}, the $q$-Onsager algebra \cite{uaw2011,ter2018}, the double affine Hecke algebras of type $(C_1^\vee, C_1)$ \cite{daha&LP,koo07,koo08,daha&Z3,DAHA2013,nilDAHA&qHahn:2016,nilDAHA&qHahn:2018}, the distance-regular graphs\cite{qRacahDRG,Tanaka:2018} and the Leonard pairs \cite{Huang:2012,Huang:DAHA&LT}. The classification of finite-dimensional irreducible $\triangle_q$-modules for $q$ not a root of unity was given in  \cite{Huang:2015} and has found connections to the representation theory of related algebras \cite{Huang:RW,Huang:AW&DAHAmodule,Huang:BImodule,
Huang:DAHAmodule,SH:2019-1}.  However the research on the representation theory of $\triangle_q$ at roots of unity is rarely seen.

Throughout the rest of this paper, 
suppose that $q$ is a primitive $d^{\rm \, th}$ root of unity with $d\not=1,2,4$ and set 
\begin{gather*}
\dbar 
=\left\{
\begin{array}{ll}
d 
\quad &\hbox{if $d$ is odd};
\\
d/2
\quad &\hbox{if $d$ is even}.
\end{array}
\right.
\end{gather*}
Note that $\dbar$ is the order of the root of unity $q^2$. The intention of this paper is to prove the following result:

\begin{thm}\label{thm:dimension}
If $V$ is a finite-dimensional irreducible $\triangle_q$-module then 
the dimension of $V$ is less than or equal to $\dbar$.
\end{thm}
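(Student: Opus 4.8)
The plan is to analyze the action of the three central elements and the generators $A,B,C$ on a finite-dimensional irreducible module $V$. Since $\F$ is algebraically closed and $V$ is finite-dimensional and irreducible, Schur's lemma tells us that each of the three central elements in \eqref{alph&beta&gam} acts as a scalar on $V$. Denote these scalars by $\alpha,\beta,\gamma$. The first step is to use these scalar relations to rewrite the ``$q$-commutators'' $qBC-q^{-1}CB$, $qCA-q^{-1}AC$, and $qAB-q^{-1}BA$ in terms of $A,B,C$ and the scalars $\alpha,\beta,\gamma$. This turns the defining relations into a closed set of quadratic relations among $A,B,C$, which should be the root of the combinatorial/spectral control we need.

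Next I would pass to the eigenspace structure of a single generator, say $A$. Let $\theta$ be an eigenvalue of $A$ on $V$ and let $V_\theta$ be the corresponding eigenspace. The key idea is to understand how $B$ (or $C$) moves between the eigenspaces of $A$. Using the rewritten relation expressing $BC$ and $CB$ in terms of $A$, one typically derives a recurrence: if $v\in V_\theta$, then $Bv$ decomposes into components lying in eigenspaces $V_{\theta'}$ where the possible $\theta'$ are governed by a $q$-shift of $\theta$ — concretely the eigenvalues should form a $q^2$-geometric-type or $q$-Racah-type pattern. Because $\F$ has $q$ a primitive $d^{\rm\,th}$ root of unity and $\dbar$ is precisely the multiplicative order of $q^2$, any such $q^2$-shifted chain of distinct eigenvalues can have length at most $\dbar$ before it wraps around. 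This is where the Chebyshev polynomials and $q$-Racah sequences flagged in the keywords enter: they encode the eigenvalue ladder and its period.

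The heart of the argument is then to show that $V$ is spanned by a single such $q^2$-chain, so that $\dim V\le \dbar$. Concretely, I would fix a suitable ``lowering/raising'' operator built from $B$ and $C$ and exhibit, starting from one eigenvector of $A$, a spanning set $\{v_0,v_1,\dots,v_{m-1}\}$ on which $A,B,C$ act in a tridiagonal or ladder fashion; irreducibility forces the span of this chain to be all of $V$. The length $m$ of the chain is capped by $\dbar$ because the eigenvalues $\theta_i$ must be distinct (an eigenvector cannot reappear without collapsing the chain), and the recurrence forces them into the set $\{\theta_0 q^{2i}\}$ (or a $q$-Racah analogue), which has exactly $\dbar$ distinct values. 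Thus $\dim V = m\le \dbar$.

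The main obstacle, I expect, is twofold. First, establishing that the relevant operator genuinely acts as a clean ladder — i.e.\ that $A$ is diagonalizable and that $B,C$ shift its eigenvalues by a fixed multiplicative factor — rather than merely quasi-lowering with lower-order correction terms; resolving this may require a careful choice of basis or a reduction to a Leonard-pair–type configuration, exploiting that the rewritten relations are exactly the Askey--Wilson relations. Second, and more delicate at a root of unity, one must rule out the possibility that the eigenvalue ladder fails to close up or that the module structure degenerates (for instance, $A$ not semisimple, or eigenvalue multiplicities exceeding one). Here the hypothesis $d\ne 1,2,4$ and the precise value of $\dbar$ as the order of $q^2$ should be exactly what is needed to guarantee that the $q$-Racah eigenvalue sequence is non-degenerate over its full period, pinning the dimension bound at $\dbar$.
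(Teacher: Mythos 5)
Your outline matches the paper's strategy only in spirit --- Schur's lemma for the central elements, a $q$-Racah eigenvalue ladder for one generator, and a cyclic-vector chain argument --- but it has a genuine gap: the single-chain argument alone cannot close the proof, and your own ``main obstacles'' paragraph names exactly the point where a new idea is required without supplying it. Concretely, the eigenspaces of $B$ (equivalently $A$) may a priori have dimension greater than one, and at a root of unity the $q$-Racah sequence $\theta_i=aq^{-2i}+a^{-1}q^{2i}$ genuinely has repeated values whenever $a^2\in\{q^{2i}\}$ (the paper's types $O(\pm2)$, $E(2)$, $E(q+q^{-1})$), so the generator need not be diagonalizable and your premise that ``the eigenvalues $\theta_i$ must be distinct,'' forming a period-$\dbar$ ladder, is false in general; the paper must work with generalized eigenspaces $V^{(2)}(\theta_i)$ for precisely this reason. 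The paper splits into two cases according to whether the operator $E_i=(B-\theta_i)(B-\theta_{i+1})A$ is injective on every eigenspace $V(\theta_i)$. Your chain argument is essentially the paper's \emph{non-injective} case (Theorem \ref{thm:dim_bidiag}): a kernel vector yields an eigenvector $v$ of $(B-\theta_{i+1})A$, and only because of that eigenvector property is the span of $v,Av,\dots,A^{\dbar-1}v$ invariant under $B$ (Lemma \ref{lem:bidiagonal}) and under $A$ (via centrality of $T_{\dbar}(A)$), forcing $V$ to equal this span and giving $\dim V\le\dbar$. When every $E_i$ is injective, no such eigenvector exists and the chain simply does not close up under $B$.

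In that remaining case the paper's resolution rests on two ingredients absent from your proposal. First, injectivity of all the $E_i$ forces all the $V(\theta_i)$ to have the same dimension and, after a type-by-type analysis of the repeated-eigenvalue sequences, yields $\dim V=\dbar\cdot\dim V(\theta_1)$ (Lemma \ref{lem:dimV=dV(1)}). Second, to conclude $\dim V(\theta_1)=1$ one needs a crude a priori bound proved \emph{before} any eigenvalue analysis: by Burnside's theorem on matrix algebras, the explicit basis of $\triangle_q$, and the centrality of $T_{\dbar}(A),T_{\dbar}(B),T_{\dbar}(C),\alpha,\beta,\gamma,\Omega$, one gets $\dim V\le\sqrt{3\dbar^2-3\dbar+1}<2\dbar$ (Proposition \ref{prop:<2d}). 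Without some substitute for this global bound, your approach cannot rule out, say, an irreducible module of dimension $2\dbar$ in which every $B$-eigenspace is two-dimensional; the hypothesis $d\ne 1,2,4$ does not do this work for you.
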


\noindent Given any nonzero $a,b,c\in \F$ and any integer $n\geq 0$, an $(n+1)$-dimensional $\triangle_q$-module $V_n(a,b,c)$ is constructed in \cite[\S 4.1]{Huang:2015}. According to \cite[Theorem 4.4]{Huang:2015} the $(n+1)$-dimensional $\triangle_q$-module $V_n(a,b,c)$ is irreducible if and only if $0\leq n\leq \dbar-1$ and 
\begin{gather}\label{irreducible}
abc, a^{-1}bc, ab^{-1}c, abc^{-1}\not\in
\{q^{2i-n-1}\,|\,i=1,2,\ldots,n\}.
\end{gather}
Since an algebraically closed field is infinite, for all $0\leq n\leq \dbar-1$ there are infinitely many nonzero $a,b,c\in \F$ satisfying the condition (\ref{irreducible}). This establishes the existence of irreducible $\triangle_q$-modules with dimensions between $1$ to $\dbar$. Therefore the bound given in Theorem \ref{thm:dimension} is sharp.

The organization of this paper is as follows: In \S\ref{s:result} we present the preliminaries on $\triangle_q$, especially the link of the $q$-Racah sequences to the center of $\triangle_q$. 
In \S\ref{s:qRacah} we display some properties of $q$-Racah sequences and classify the $q$-Racah sequences into five types. 
In \S\ref{s:decomposition} we decompose the finite-dimensional irreducible $\triangle_q$-modules with respect to the corresponding $q$-Racah sequences. In \S\ref{s:KTE} we investigate the actions of several operators on the finite-dimensional irreducible $\triangle_q$-modules. In \S\ref{s:proof} we give a proof for Theorem \ref{thm:dimension}.

\section{Preliminaries}\label{s:result}

For notational convenience, we set $\alpha,\beta,\gamma$ to be the multiplication of  (\ref{alph&beta&gam}) by $q+q^{-1}$. In other words
\begin{align}
\frac{\alpha}{q+q^{-1}}
&=
A+\frac{qBC-q^{-1}CB}{q^2-q^{-2}},
\label{e:alpha}
\\
\frac{\beta}{q+q^{-1}}
&=
B+
\frac{qCA-q^{-1}AC}{q^2-q^{-2}},
\label{e:beta}
\\
\frac{\gamma}{q+q^{-1}}
&=
C+
\frac{qAB-q^{-1}BA}{q^2-q^{-2}}.
\label{e:gamma}
\end{align}
By Definition \ref{defn:AW} each of $\alpha,\beta,\gamma$ is a central element of $\triangle_q$.

\begin{lem}\label{lem:ABc}
The algebra $\triangle_q$ is generated by $A,B,\gamma$. Moreover 
\begin{align}
\alpha &= \frac{B^2A-(q^2+q^{-2})BAB+AB^2+(q^2-q^{-2})^2A+(q-q^{-1})^2B\gamma}{(q-q^{-1})(q^2-q^{-2})}, \label{e:a->ABc}
\\
\beta &= \frac{A^2B-(q^2+q^{-2})ABA+BA^2+(q^2-q^{-2})^2B+(q-q^{-1})^2A\gamma}{(q-q^{-1})(q^2-q^{-2})}. \label{e:b->ABc}
\end{align}
\end{lem}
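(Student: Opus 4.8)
The plan is to begin from the defining relation (\ref{e:gamma}) and solve it for $C$. Since the standing hypothesis $q^4\neq 1$ forces both $q+q^{-1}\neq 0$ and $q^2-q^{-2}\neq 0$, equation (\ref{e:gamma}) can be rearranged to
\begin{equation*}
C=\frac{\gamma}{q+q^{-1}}-\frac{qAB-q^{-1}BA}{q^2-q^{-2}},
\end{equation*}
which exhibits $C$ as a polynomial expression in $A,B,\gamma$. Because $\triangle_q$ is generated by $A,B,C$ by Definition \ref{defn:AW}, and this shows $C$ lies in the subalgebra generated by $A,B,\gamma$, the first assertion follows immediately: $\triangle_q$ is generated by $A,B,\gamma$.

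For the identity (\ref{e:a->ABc}) I would substitute the displayed expression for $C$ into the right-hand side of (\ref{e:alpha}). It is convenient first to rewrite (\ref{e:alpha}) as $\alpha=(q+q^{-1})A+(qBC-q^{-1}CB)/(q-q^{-1})$, using the factorization $q^2-q^{-2}=(q-q^{-1})(q+q^{-1})$ to reduce the coefficient. The heart of the computation is expanding $qBC-q^{-1}CB$ after replacing $C$. Here the centrality of $\gamma$ is the essential structural input: it lets the two $\gamma$-terms be collected into a single multiple of $B\gamma$, namely $(q-q^{-1})B\gamma/(q+q^{-1})$, while the purely $A,B$ terms combine into $\bigl(B^2A+AB^2-(q^2+q^{-2})BAB\bigr)/(q^2-q^{-2})$. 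Placing everything over the common denominator $(q-q^{-1})(q^2-q^{-2})$, and checking that the summand $(q+q^{-1})A$ contributes the term $(q^2-q^{-2})^2A$ and that $B\gamma/(q+q^{-1})$ contributes $(q-q^{-1})^2 B\gamma$, then produces exactly (\ref{e:a->ABc}).

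The second identity (\ref{e:b->ABc}) is obtained by the identical procedure applied to (\ref{e:beta}): substitute the same expression for $C$ into $\beta=(q+q^{-1})B+(qCA-q^{-1}AC)/(q-q^{-1})$ and simplify, again using $q^2-q^{-2}=(q-q^{-1})(q+q^{-1})$ and the centrality of $\gamma$. As a consistency check one observes that the right-hand side of (\ref{e:b->ABc}) is the image of the right-hand side of (\ref{e:a->ABc}) under the formal interchange $A\leftrightarrow B$, so the second computation introduces no new idea.

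I do not anticipate a genuine obstacle: the argument is a direct substitution followed by elementary rearrangement. The only points demanding care are bookkeeping ones, namely tracking the noncommuting products $AB$ and $BA$ produced when $C$ is replaced, and correctly clearing the nested denominators $q+q^{-1}$, $q^2-q^{-2}$, and $(q-q^{-1})(q^2-q^{-2})$. The centrality of $\gamma$ is the single algebraic fact that must be invoked, and it is precisely what allows the $\gamma$-terms to coalesce cleanly into the stated multiples of $B\gamma$ and $A\gamma$.
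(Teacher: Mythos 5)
Your proof is correct and follows exactly the paper's own route: solve (\ref{e:gamma}) for $C$ to get $C=\frac{\gamma}{q+q^{-1}}-\frac{qAB-q^{-1}BA}{q^2-q^{-2}}$, conclude that $A,B,\gamma$ generate $\triangle_q$, and then substitute this expression into (\ref{e:alpha}) and (\ref{e:beta}). The only difference is that you carry out the substitution explicitly (and your coefficient bookkeeping checks out), whereas the paper leaves that computation to the reader.
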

\begin{proof}
By (\ref{e:gamma}) the element $C$ can be expressed in terms of $A,B,\gamma$ as follows:  
\begin{gather}
C =\frac{\gamma}{q+q^{-1}}-\frac{q A B-q^{-1} B A}{q^2-q^{-2}}.
\label{e:C->ABc} 
\end{gather}
Therefore $A,B,\gamma$ form a set of generators of $\triangle_q$. 
To get (\ref{e:a->ABc}) and (\ref{e:b->ABc}) substitute (\ref{e:C->ABc}) into (\ref{e:alpha}) and (\ref{e:beta}), respectively.
\end{proof}

Recall from \cite{hidden_sym,uaw2011} that the celebrated central element 
$$
\Omega=q ABC+q^2 A^2+q^{-2} B^2+q^2 C^2-q A\alpha-q^{-1} B\beta-q C \gamma
$$
is called the {\it Casimir element} of $\triangle_q$.

\begin{lem}[Theorem 7.5, \cite{uaw2011}]\label{lem:UAW_basis}
The elements 
\begin{gather*}
A^i B^j C^k \Omega^\ell \alpha^r \beta^s \gamma^t
\qquad 
\hbox{for all integers $i,j,k,\ell,r,s,t\geq 0$ with $ijk=0$}
\end{gather*}
are an $\F$-basis for $\triangle_q$.
\end{lem}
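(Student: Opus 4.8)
The plan is to recognize the claimed spanning set as a Poincar\'e--Birkhoff--Witt basis and to prove the statement with Bergman's diamond lemma. I would present $\triangle_q$ on the alphabet $A,B,C,\alpha,\beta,\gamma,\Omega$, where the last four letters abbreviate the central polynomials in $A,B,C$ fixed by \eqref{e:alpha}--\eqref{e:gamma} and by the displayed formula for $\Omega$. Solving \eqref{e:gamma}, \eqref{e:alpha}, \eqref{e:beta} for the out-of-order quadratic terms $BA$, $CB$, $CA$ gives the straightening rules
\begin{align*}
BA &\;\longrightarrow\; q^{2}AB + q(q^{2}-q^{-2})\,C - q(q-q^{-1})\,\gamma,\\
CB &\;\longrightarrow\; q^{2}BC + q(q^{2}-q^{-2})\,A - q(q-q^{-1})\,\alpha,\\
CA &\;\longrightarrow\; q^{-2}AC - q^{-1}(q^{2}-q^{-2})\,B + q^{-1}(q-q^{-1})\,\beta,
\end{align*}
while solving the formula for $\Omega$ for its leading term gives the cubic rule
\begin{align*}
ABC &\;\longrightarrow\; q^{-1}\Omega - q\,A^{2} - q^{-3}B^{2} - q\,C^{2} + A\alpha + q^{-2}B\beta + C\gamma.
\end{align*}
Centrality of $\alpha,\beta,\gamma,\Omega$ supplies the remaining commuting rules, which move these four letters to the right of $A,B,C$ and sort them among themselves into the order $\Omega,\alpha,\beta,\gamma$.

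Spanning is the routine half. Assign the weight $1$ to $A,B,C$, the weight $2$ to $\alpha,\beta,\gamma$, and the weight $3$ to $\Omega$, and order monomials by total weight, then by length, then lexicographically with $A<B<C<\Omega<\alpha<\beta<\gamma$. Each rule above strictly lowers a word in this well-order, so the reduction process terminates. A word is irreducible exactly when it contains none of the leading words $BA$, $CB$, $CA$, $ABC$ and has its central letters sorted to the right; these are precisely the monomials $A^{i}B^{j}C^{k}\Omega^{\ell}\alpha^{r}\beta^{s}\gamma^{t}$, where the ban on the factor $ABC$ forces $ijk=0$ (a block $A^{i}B^{j}C^{k}$ with $i,j,k\geq 1$ contains $ABC$). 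Since $A,B,C$ generate $\triangle_q$ by Lemma~\ref{lem:ABc}, the listed elements span.

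The content of the theorem is linear independence, which by the diamond lemma follows once every overlap ambiguity of the reduction system is shown to be resolvable, i.e.\ the two reductions of each minimally overlapping word return the same normal form. Among the quadratic rules the only overlap is $CBA$; the cubic rule meets them in the words $ABCA$, $ABCB$, $BABC$, $CABC$; and there are the routine overlaps created when a straightening rule deposits a central letter next to a capital, together with the reorderings among $\Omega,\alpha,\beta,\gamma$. For each such word one reduces both ways and compares. I expect the cubic overlaps $ABCA,ABCB,BABC,CABC$ to be the main obstacle: each unwinds through a long chain of straightenings and commutations, and the equality of the two resulting normal forms is a nontrivial identity in $q$ expressing the compatibility of the Casimir relation with the defining relations; this is exactly where the centrality of $\Omega$ must be used. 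Once all ambiguities are resolved, the diamond lemma identifies the normal-form monomials---the elements in the statement---as an $\F$-basis of $\triangle_q$. Alternatively, independence could be obtained by exhibiting a module on which the normal-form monomials act by linearly independent operators, but the diamond-lemma route is self-contained.
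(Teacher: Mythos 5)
The paper does not actually prove this lemma; it is imported wholesale from \cite{uaw2011}, Theorem 7.5, so your diamond-lemma route must stand on its own. Much of your setup is sound: the four straightening rules are exactly what one gets by solving \eqref{e:gamma}, \eqref{e:alpha}, \eqref{e:beta} and the displayed formula for $\Omega$ (I checked the coefficients), your weighted order is a semigroup well-order under which every rule strictly decreases, and your overlap inventory is correct \emph{for the rules you wrote down}. The fatal flaw is combinatorial: it is false that banning the factor $ABC$ from a block $A^{i}B^{j}C^{k}$ forces $ijk=0$. The word $AB^{2}C$ contains none of $BA$, $CB$, $CA$, $ABC$ as a contiguous factor; in general $A^{i}B^{j}C^{k}$ contains the factor $ABC$ if and only if $i\geq 1$, $j=1$ and $k\geq 1$. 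So the irreducible words of your system form a strictly larger set than the claimed basis. This breaks both halves of your argument: termination shows only that this larger set spans, and since the larger set is linearly dependent in $\triangle_q$ (by the theorem itself, $AB^{2}C$ is a nontrivial combination of the monomials with $ijk=0$ and powers of $\Omega$), Bergman's equivalence of ``all ambiguities resolvable'' with ``irreducible words form a basis'' forces some of your listed ambiguities to be unresolvable. Your plan of ``reduce both ways and compare'' would therefore refute, not confirm, confluence.

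One sees the failure explicitly at $ABCB$. Reducing the right-hand occurrence first gives $AB(CB)=q^{2}AB^{2}C+q(q^{2}-q^{-2})ABA-q(q-q^{-1})AB\alpha$, whose normal form contains the irreducible word $AB^{2}C$ with coefficient $q^{2}$; reducing $(ABC)B$ via the cubic rule and then pushing $\Omega,\alpha,\beta,\gamma$ rightward yields $q^{-1}B\Omega$ plus words of degree at most $3$ in $A,B,C$, hence no $AB^{2}C$ term at all. The two normal forms differ, so the ambiguity does not resolve (the overlap $BABC$ fails in the same way). The repair is a completion step your proposal does not anticipate: the discrepancy at $ABCB$ is itself the missing rule $AB^{2}C\to q^{-3}B\Omega+\cdots$, and iterating generates rules $AB^{j}C\to\cdots$ for all $j\geq 1$, after which confluence of the completed, infinite but patterned, system must be verified; only then do the normal forms collapse to the stated monomials with $ijk=0$. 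Separately, the centrality of $\Omega$ enters your construction as \emph{imposed} commutation rules, which present an algebra isomorphic to $\triangle_q$ only because $\Omega$ is already known to be central there (a prior theorem of \cite{uaw2011}); it is not, as you suggest, something ``used'' or extracted in resolving the cubic overlaps. As written, the proposal misidentifies the normal forms and its decisive verifications, had you carried them out, would have failed.
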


For each integer $n\geq 0$ we define the polynomial
$$
T_n(x) = 
\sum_{i=0}^{\lfloor n/2 \rfloor} 
(-1)^i
\left(
{n-i \choose i}+{n-i-1\choose i-1}
\right) x^{n-2 i}.
$$
Here we set ${-1\choose -1}=1$  and ${n\choose -1}=0$ for all $n\geq 0$.
Note that $\frac{1}{2}T_n(2x)$ is the $n$th Chebyshev polynomial of the first kind for all $n\geq 0$ \cite[\S 9.8.2]{Koe2010}.

\begin{lem}[Theorem 3.2, \cite{centerAW:2016}]\label{lem:center}
$T_{\dbar}(A), T_{\dbar}(B), T_{\dbar}(C)$ are central in $\triangle_q$.
\end{lem}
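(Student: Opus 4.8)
The plan is to reduce everything to the single identity $[T_{\dbar}(A),B]=0$ and then prove it through a divisibility property of Chebyshev polynomials. The defining relations show that the three central elements in (\ref{alph&beta&gam}) are cyclically rotated by the assignment $A\to B\to C\to A$, so $\triangle_q$ admits an automorphism realizing this rotation (together with $\alpha\to\beta\to\gamma\to\alpha$). Consequently it is enough to prove that $T_{\dbar}(A)$ is central, the other two cases following by applying this automorphism. By Lemma \ref{lem:ABc} the algebra is generated by $A,B,\gamma$; since $\gamma$ is central and $T_{\dbar}(A)$ commutes with $A$, the assertion reduces to $[T_{\dbar}(A),B]=0$.

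First I would record the tridiagonal relation obtained from (\ref{e:b->ABc}) by clearing denominators and using the centrality of $\beta$ and $\gamma$:
\begin{equation*}
A^2B-(q^2+q^{-2})ABA+BA^2=(q-q^{-1})(q^2-q^{-2})\beta-(q^2-q^{-2})^2B-(q-q^{-1})^2\gamma A.
\end{equation*}
Let $\mathcal L$ and $\mathcal R$ denote the (commuting) operators of left and right multiplication by $A$ on $\triangle_q$, so that $f(\mathcal L)X=f(A)X$ and $f(\mathcal R)X=Xf(A)$ for every polynomial $f$, and hence $[T_{\dbar}(A),B]=(T_{\dbar}(\mathcal L)-T_{\dbar}(\mathcal R))B$. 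With $M(x,y)=x^2-(q^2+q^{-2})xy+y^2+(q^2-q^{-2})^2$ the tridiagonal relation reads
\begin{equation*}
M(\mathcal L,\mathcal R)B=(q-q^{-1})(q^2-q^{-2})\beta-(q-q^{-1})^2\gamma A,
\end{equation*}
a sum of central elements times powers of $A$.

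The heart of the matter is the claim that, in $\F[x,y]$, the quadratic $M(x,y)$ divides $T_{\dbar}(x)-T_{\dbar}(y)$ and the quotient $H(x,y)$ is itself divisible by $x-y$. I would establish divisibility by $M$ by inspecting its zero locus: using $T_n(w+w^{-1})=w^n+w^{-n}$, one verifies that for every nonzero $\eta$ the points $x=q^{\pm 2}\eta+q^{\mp 2}\eta^{-1}$, $y=\eta+\eta^{-1}$ satisfy $M=0$ and realize both roots of the monic (in $x$) quadratic $M$ over $\F[y]$. At these points $T_{\dbar}(x)=q^{\pm 2\dbar}\eta^{\dbar}+q^{\mp 2\dbar}\eta^{-\dbar}=\eta^{\dbar}+\eta^{-\dbar}=T_{\dbar}(y)$, the decisive cancellation being $q^{2\dbar}=1$ because $\dbar$ is the order of $q^2$. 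Dividing $T_{\dbar}(x)-T_{\dbar}(y)$ by $M$ and observing that the remainder is linear in $x$ yet vanishes at both roots for infinitely many $\eta$, the remainder must be zero, so $M\mid T_{\dbar}(x)-T_{\dbar}(y)$. As $x-y$ always divides $T_{\dbar}(x)-T_{\dbar}(y)$ and $M$ is irreducible of degree two, hence coprime to $x-y$, unique factorization yields $(x-y)M\mid T_{\dbar}(x)-T_{\dbar}(y)$; thus $H=(T_{\dbar}(x)-T_{\dbar}(y))/M$ is divisible by $x-y$.

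To finish, since $\mathcal L$ and $\mathcal R$ commute I may substitute them into the factorization to get
\begin{equation*}
[T_{\dbar}(A),B]=(T_{\dbar}(\mathcal L)-T_{\dbar}(\mathcal R))B=H(\mathcal L,\mathcal R)\bigl(M(\mathcal L,\mathcal R)B\bigr).
\end{equation*}
For any central $z$ and any integer $k\geq 0$, each monomial $\mathcal L^i\mathcal R^j$ sends $zA^k$ to $zA^{i+k+j}$, so $H(\mathcal L,\mathcal R)(zA^k)=zA^k\,H(A,A)$; and $H(A,A)=0$ since $x-y$ divides $H$. Applying this to the two terms of $M(\mathcal L,\mathcal R)B$ gives $[T_{\dbar}(A),B]=0$, as required. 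I expect the divisibility step of the previous paragraph to be the main obstacle: one must recognize the correct quadratic $M$ governing the $A$-action on $B$ and check that its two spectral branches differ precisely by the factor $q^{\pm 2}$, so that the root-of-unity condition $q^{2\dbar}=1$ forces $T_{\dbar}(x)=T_{\dbar}(y)$ on the zero locus of $M$. The remaining steps are formal manipulations with the commuting multiplication operators.
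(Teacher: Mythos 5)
Your proposal cannot be compared with ``the paper's own proof'' in the usual sense, because the paper gives none: Lemma \ref{lem:center} is imported verbatim from \cite{centerAW:2016} (Theorem 3.2 there) with no argument supplied. What you have written is a self-contained substitute, and after checking it step by step I find it correct. The cyclic substitution $A\mapsto B\mapsto C\mapsto A$ does permute the three elements in (\ref{alph&beta&gam}), hence extends to an automorphism of $\triangle_q$ of order three, so it suffices to treat $T_{\dbar}(A)$; by Lemma \ref{lem:ABc} centrality of $T_{\dbar}(A)$ reduces to $[T_{\dbar}(A),B]=0$; relation (\ref{e:b->ABc}), rewritten through the commuting left/right multiplication operators $\mathcal L,\mathcal R$, says exactly that $M(\mathcal L,\mathcal R)B$ is a sum of central elements times powers of $A$, where $M(x,y)=x^2-(q^2+q^{-2})xy+y^2+(q^2-q^{-2})^2$; and your key divisibility $(x-y)M(x,y)\mid T_{\dbar}(x)-T_{\dbar}(y)$ is genuine: the two roots of $M(\cdot,y)$ at $y=\eta+\eta^{-1}$ are $q^{\pm2}\eta+q^{\mp2}\eta^{-1}$, where $T_{\dbar}$ takes the common value $\eta^{\dbar}+\eta^{-\dbar}$ precisely because $q^{2\dbar}=1$, and since $\F$ is infinite the remainder argument forces $M\mid T_{\dbar}(x)-T_{\dbar}(y)$. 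Two minor points deserve mention. You assert but do not prove that $M$ is irreducible; fortunately irreducibility is not needed, since coprimality with $x-y$ already follows from $x-y\nmid M$, which is clear because $M(y,y)=-(q-q^{-1})^2y^2+(q^2-q^{-2})^2$ has nonzero constant term (here $q^4\neq1$ is used). Also, the last step tacitly uses that evaluation at the commuting operators $\mathcal L,\mathcal R$ is an algebra homomorphism $\F[x,y]\to{\rm End}_\F(\triangle_q)$, so the polynomial factorization survives the substitution; this is true and worth one sentence. Compared with the paper, your route buys self-containedness at modest cost: it uses only the defining relations, Lemma \ref{lem:ABc}, and the identity $T_n(w+w^{-1})=w^n+w^{-n}$, and it exposes the actual root-of-unity mechanism---the two spectral branches of $M$ differ by the factor $q^{\pm2}$, so $T_{\dbar}$ cannot distinguish them---which the citation to \cite{centerAW:2016} keeps hidden.
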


Let $\Z$ denote the additive group of integers.  
Let $\{\theta_i\}_{i\in \Z/\dbar \Z}$ denote a sequence in $\F$ of the form 
\begin{gather}\label{qRacah_seq}
\theta_i=aq^{-2i}+a^{-1} q^{2i}
\qquad 
\hbox{for all $i\in \Z/\dbar \Z$},
\end{gather}
where $a$ is a nonzero scalar in $\F$. We call the sequence $\{\theta_i\}_{i\in \Z/\dbar \Z}$ a {\it $q$-Racah sequence}.
It follows from  \cite[Equation 17]{centerAW:2016} that
$$
T_{\dbar}(x)=\prod_{i\in \Z/\dbar \Z}(x-\theta_i)+a^{\dbar}+a^{-\dbar}.
$$
This, combined with Lemma \ref{lem:center}, yields that 

\begin{lem}
[Proposition 3.5, \cite{centerAW:2016}]
\label{lem:q-Racah&center}
 If $\{\theta_i\}_{i\in \Z/\dbar \Z}$ is a $q$-Racah sequence then each of 
$$
\prod_{i\in \Z/\dbar \Z} (A-\theta_i),
\quad 
\prod_{i\in \Z/\dbar \Z} (B-\theta_i),
\quad 
\prod_{i\in \Z/\dbar \Z} (C-\theta_i)
$$
is central in $\triangle_q$.
\end{lem}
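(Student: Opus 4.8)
The plan is to reduce the centrality of each of the three products to the centrality of the single element $T_{\dbar}(A)$ (respectively $T_{\dbar}(B)$, $T_{\dbar}(C)$), which is already furnished by Lemma \ref{lem:center}. The bridge between the product form $\prod_{i}(x-\theta_i)$ and the Chebyshev polynomial $T_{\dbar}(x)$ is exactly the polynomial identity displayed immediately above the statement.

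First I would take that identity, namely $T_{\dbar}(x)=\prod_{i\in\Z/\dbar\Z}(x-\theta_i)+a^{\dbar}+a^{-\dbar}$, and rearrange it to isolate the product, obtaining the polynomial identity $\prod_{i\in\Z/\dbar\Z}(x-\theta_i)=T_{\dbar}(x)-(a^{\dbar}+a^{-\dbar})$ in the single indeterminate $x$ over $\F$. Since this is an identity of polynomials in one variable, I would then substitute the generator $A$ for $x$; this substitution is legitimate precisely because a single-variable expression involves no products of distinct noncommuting generators, so no ordering ambiguity can arise in passing from $\F[x]$ to $\triangle_q$. This yields the relation $\prod_{i\in\Z/\dbar\Z}(A-\theta_i)=T_{\dbar}(A)-(a^{\dbar}+a^{-\dbar})\cdot 1$ inside $\triangle_q$.

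Finally I would observe that $T_{\dbar}(A)$ is central by Lemma \ref{lem:center}, while the term $(a^{\dbar}+a^{-\dbar})\cdot 1$ is a scalar multiple of the identity and hence trivially central. Because the central elements of $\triangle_q$ form a subalgebra, in particular one closed under subtraction, the difference $\prod_{i\in\Z/\dbar\Z}(A-\theta_i)$ is central. Running the identical argument with $B$ in place of $A$, and then with $C$, and invoking the corresponding cases of Lemma \ref{lem:center}, establishes the centrality of the two remaining products.

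I do not anticipate a genuine obstacle: once the product is rewritten through the Chebyshev identity, the statement is a direct corollary of Lemma \ref{lem:center}. The only point deserving a word of care is the passage from the polynomial identity in $x$ to its evaluation at the algebra element $A$, but this is harmless since a one-variable polynomial has an unambiguous value at any element of an associative algebra.
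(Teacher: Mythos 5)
Your proposal is correct and follows exactly the paper's own route: the paper derives the lemma by rearranging the identity $T_{\dbar}(x)=\prod_{i\in \Z/\dbar \Z}(x-\theta_i)+a^{\dbar}+a^{-\dbar}$ and combining it with the centrality of $T_{\dbar}(A)$, $T_{\dbar}(B)$, $T_{\dbar}(C)$ from Lemma \ref{lem:center}, just as you do. Your extra remark on the legitimacy of substituting an algebra element into a one-variable polynomial identity is sound and harmless.
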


\begin{lem}\label{lem:basis2_UAW}
The elements 
\begin{gather*}
A^{i_0} B^{j_0} C^{k_0}
T_{\dbar} (A)^{i_1} T_{\dbar} (B)^{j_1} T_{\dbar} (C)^{k_1}
\Omega^\ell
\alpha^{r} \beta^{s} \gamma^{t}
\end{gather*}
for all $i_0,j_0,k_0\in\{0,1,\ldots,\dbar-1\}$ and all integers $i_1,j_1,k_1,\ell,r,s,t \geq 0$ with $(i_0+i_1)(j_0+j_1)(k_0+k_1)=0$ form an $\F$-basis for $\triangle_q$.
\end{lem}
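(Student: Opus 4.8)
The plan is to obtain the asserted basis from the one in Lemma~\ref{lem:UAW_basis} by a unitriangular change of coordinates, exploiting that $T_{\dbar}(A),T_{\dbar}(B),T_{\dbar}(C)$ are central (Lemma~\ref{lem:center}) and that $T_{\dbar}$ is monic of degree $\dbar$.

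First I would record the relevant one-variable fact. Since $T_{\dbar}(x)=x^{\dbar}+(\hbox{lower degree})$, the division algorithm attaches to each integer $n\ge 0$ a unique pair $m\in\{0,1,\ldots,\dbar-1\}$ and $p\ge 0$ with $n=m+\dbar p$, and then $x^{m}T_{\dbar}(x)^{p}=x^{n}+(\hbox{terms of degree}<n)$. Applying this with $x$ replaced by $A$, and using that $T_{\dbar}(A)$ is central, shows that the elements $A^{m}T_{\dbar}(A)^{p}$ with $0\le m<\dbar$ and $p\ge 0$ and the powers $A^{n}$ with $n\ge 0$ span the same subspace, with leading powers matching under $n=m+\dbar p$; the same holds verbatim for $B$ and $C$.

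Next I would match the two index sets. Sending $(i_0,i_1)\mapsto i=i_0+\dbar i_1$, and likewise for $j,k$, while keeping $\ell,r,s,t$ fixed, defines a bijection from the index set of the proposed elements onto that of Lemma~\ref{lem:UAW_basis}: it is inverted by the division algorithm, and since $i=0$ exactly when $i_0+i_1=0$ (and similarly for $j,k$), the constraint $(i_0+i_1)(j_0+j_1)(k_0+k_1)=0$ corresponds precisely to $ijk=0$. Now fix a proposed element $E$ with index $(i_0,i_1,j_0,j_1,k_0,k_1,\ell,r,s,t)$ and let $(i,j,k,\ell,r,s,t)$ be its image. Because $T_{\dbar}(A)^{i_1},T_{\dbar}(B)^{j_1},T_{\dbar}(C)^{k_1}$ are central, I may regroup
\[
E=\bigl(A^{i_0}T_{\dbar}(A)^{i_1}\bigr)\bigl(B^{j_0}T_{\dbar}(B)^{j_1}\bigr)\bigl(C^{k_0}T_{\dbar}(C)^{k_1}\bigr)\,\Omega^{\ell}\alpha^{r}\beta^{s}\gamma^{t},
\]
in which each parenthesized factor is a polynomial in a single generator with leading term $A^{i}$, $B^{j}$, $C^{k}$ respectively. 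Expanding the product, $E$ becomes an $\F$-linear combination of terms $A^{a}B^{b}C^{c}\,\Omega^{\ell}\alpha^{r}\beta^{s}\gamma^{t}$ with $a\le i$, $b\le j$, $c\le k$, the term $(a,b,c)=(i,j,k)$ occurring with coefficient $1$ and every other term satisfying $a+b+c<i+j+k$. The constraint forces at least one of $i,j,k$ to vanish, say $k=0$; then every term has $c=0$, so $abc=0$ and each is a genuine basis element of Lemma~\ref{lem:UAW_basis} (the cases $i=0$ and $j=0$ are identical by the analogous regrouping).

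Finally I would invoke triangularity. Filter $\triangle_q$ by the total $A,B,C$-degree of the basis elements of Lemma~\ref{lem:UAW_basis} within each fixed $(\ell,r,s,t)$. The previous paragraph shows that each proposed element $E$ equals its leading basis element modulo strictly lower degree, and that $E\mapsto(\hbox{leading element})$ is exactly the bijection of index sets above. Hence on each associated graded piece the proposed elements are carried bijectively to the basis of Lemma~\ref{lem:UAW_basis}, so the change of basis is unitriangular and therefore invertible, whence the proposed elements form an $\F$-basis for $\triangle_q$. The one delicate point—and the crux of the argument—is the use of the constraint to guarantee that expanding $E$ never produces a forbidden monomial $A^{a}B^{b}C^{c}$ with $a,b,c$ all positive; without it the expansion would leave the span of the Lemma~\ref{lem:UAW_basis} basis and the triangular comparison would collapse.
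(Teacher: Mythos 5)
Your proof is correct and follows essentially the same route as the paper: the paper's entire proof is the one-line instruction ``Combine Lemmas~\ref{lem:UAW_basis} and \ref{lem:center},'' and your unitriangular change-of-basis argument (division algorithm $n=m+\dbar p$, centrality of $T_{\dbar}(A),T_{\dbar}(B),T_{\dbar}(C)$ to regroup, the constraint $(i_0+i_1)(j_0+j_1)(k_0+k_1)=0$ matching $ijk=0$ so no forbidden monomials appear, then triangularity with respect to total degree) is precisely the fleshed-out content of that combination.
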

\begin{proof}
Combine  Lemmas \ref{lem:UAW_basis} and \ref{lem:center}.
\end{proof}

\begin{lem}\label{lem:Schur}
If $V$ is a finite-dimensional irreducible $\triangle_q$-module then each central element of $\triangle_q$ acts on $V$ as scalar multiplication. 
\end{lem}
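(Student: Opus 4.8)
The plan is to run the standard Schur's Lemma argument, tailored to central elements. First I would note that for any central $z\in\triangle_q$, the map $V\to V$ sending $v\mapsto zv$ is not merely $\F$-linear but is in fact a $\triangle_q$-module endomorphism. Indeed, for every $a\in\triangle_q$ and every $v\in V$ we have $z(av)=(za)v=(az)v=a(zv)$, where the middle equality is exactly the statement that $z$ commutes with $a$. This is the only place centrality enters, and it is what upgrades the linear map to a module map.

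Next I would use the two standing hypotheses---that $\F$ is algebraically closed and that $V$ is finite-dimensional---to produce an eigenvalue. Since the action of $z$ on $V$ is represented by a matrix over $\F$, its characteristic polynomial splits and hence has a root $\lambda\in\F$; fix a nonzero eigenvector $v_0$ with $zv_0=\lambda v_0$. Now consider $z-\lambda$ acting on $V$. Because $z$ acts as a module endomorphism and the scalar $\lambda$ is central, $z-\lambda$ is again a module endomorphism, so its kernel $W=\{v\in V:zv=\lambda v\}$ is a $\triangle_q$-submodule of $V$, not just a subspace. As $v_0\in W$, the submodule $W$ is nonzero, and irreducibility of $V$ forces $W=V$. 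Hence $zv=\lambda v$ for all $v\in V$, i.e.\ $z$ acts as scalar multiplication by $\lambda$.

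I do not expect any genuine obstacle here: the result is the finite-dimensional form of Schur's Lemma over an algebraically closed field, and both ingredients it requires are supplied by the hypotheses. The single point demanding a moment's care is the observation that the $\lambda$-eigenspace of $z$ is a \emph{submodule}---this is precisely where the centrality of $z$ is used, and without it the argument would collapse.
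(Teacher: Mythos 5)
Your proof is correct and is essentially the paper's argument: the paper disposes of this lemma with the single line ``Immediate from Schur's lemma,'' and what you have written out---the central element acting as a module endomorphism, the eigenvalue supplied by algebraic closedness and finite dimension, and the eigenspace being a nonzero submodule forced to equal $V$ by irreducibility---is precisely the standard proof of that lemma specialized to central elements.
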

\begin{proof}
Immediate from Schur's lemma.
\end{proof}

\begin{prop}\label{prop:<2d}
If $V$ is a finite-dimensional irreducible $\triangle_q$-module then 
the dimension of $V$ is less than or equal to $\sqrt{3\dbar^2-3\dbar+1}$.
\end{prop}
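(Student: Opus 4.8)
The plan is to bound the dimension of the image of $\triangle_q$ inside $\mathrm{End}_\F(V)$ by a counting argument, and then invoke the density theorem to convert this into a bound on $\dim V$. The arithmetic miracle that makes the target value natural is the identity $3\dbar^2-3\dbar+1=\dbar^3-(\dbar-1)^3$, which signals an inclusion–exclusion count of monomials.

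First I would apply Lemma \ref{lem:Schur}: since $V$ is finite-dimensional and irreducible, each of the central elements $T_{\dbar}(A)$, $T_{\dbar}(B)$, $T_{\dbar}(C)$, $\Omega$, $\alpha$, $\beta$, $\gamma$ acts on $V$ as a scalar. Combining this with the explicit $\F$-basis for $\triangle_q$ from Lemma \ref{lem:basis2_UAW}, I get that the image of $\triangle_q$ in $\mathrm{End}_\F(V)$ is spanned by the images of the monomials $A^{i_0}B^{j_0}C^{k_0}$ alone, because on $V$ every central factor $T_{\dbar}(A)^{i_1}T_{\dbar}(B)^{j_1}T_{\dbar}(C)^{k_1}\Omega^{\ell}\alpha^{r}\beta^{s}\gamma^{t}$ collapses to a scalar multiple of the identity.

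Next I would read off exactly which reduced monomials survive. A basis element contributes a scalar multiple of $A^{i_0}B^{j_0}C^{k_0}$ with $i_0,j_0,k_0\in\{0,1,\ldots,\dbar-1\}$ only when the constraint $(i_0+i_1)(j_0+j_1)(k_0+k_1)=0$ can be met for some integers $i_1,j_1,k_1\geq 0$; since a factor such as $i_0+i_1$ vanishes precisely when $i_0=i_1=0$, this is possible exactly when $i_0j_0k_0=0$. Hence the image of $\triangle_q$ in $\mathrm{End}_\F(V)$ is spanned by the set $\{A^{i_0}B^{j_0}C^{k_0}\mid i_0,j_0,k_0\in\{0,\ldots,\dbar-1\},\ i_0j_0k_0=0\}$, and counting these triples gives $\dbar^3-(\dbar-1)^3=3\dbar^2-3\dbar+1$.

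Finally, because $\F$ is algebraically closed and $V$ is a finite-dimensional irreducible $\triangle_q$-module, the Jacobson density theorem (Burnside's theorem) forces the image of $\triangle_q$ to be all of $\mathrm{End}_\F(V)$, whose dimension is $(\dim V)^2$. Comparing with the spanning count yields $(\dim V)^2\leq 3\dbar^2-3\dbar+1$, that is, $\dim V\leq\sqrt{3\dbar^2-3\dbar+1}$. I expect the main obstacle to be the bookkeeping in the third step: one must confirm that no monomial with all three exponents strictly positive can appear once the central generators are turned into scalars, so that the clean count $\dbar^3-(\dbar-1)^3$ is exactly right; the remaining ingredients are direct applications of Schur's lemma, the explicit basis of Lemma \ref{lem:basis2_UAW}, and the density theorem.
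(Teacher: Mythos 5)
Your proposal is correct and follows essentially the same route as the paper's own proof: Schur's lemma collapses the central factors $T_{\dbar}(A),T_{\dbar}(B),T_{\dbar}(C),\Omega,\alpha,\beta,\gamma$ to scalars, the basis of Lemma \ref{lem:basis2_UAW} then shows the image of $\triangle_q$ in $\mathrm{End}_\F(V)$ is spanned by the $\dbar^3-(\dbar-1)^3$ monomials $A^{i_0}B^{j_0}C^{k_0}$ with $i_0j_0k_0=0$, and Burnside's theorem (surjectivity of the representation) converts this into $(\dim V)^2\leq 3\dbar^2-3\dbar+1$. Your bookkeeping on the constraint $(i_0+i_1)(j_0+j_1)(k_0+k_1)=0$ is exactly the point the paper uses implicitly, so there is no gap.
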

\begin{proof}
Let $\pi:\triangle_q\to {\rm End}(V)$ denote the corresponding representation. According to Burnside's theorem on matrix algebras \cite{burnside1980}, the representation $\pi$ is onto. By Lemma \ref{lem:Schur} the images of 
$T_{\dbar} (A), T_{\dbar} (B), T_{\dbar} (C), \alpha, \beta, \gamma,\Omega$ under $\pi$ are scalar multiples of the identity. Combined with Lemma \ref{lem:basis2_UAW}, it follows that ${\rm End}(V)$ is spanned by 
\begin{gather}\label{e:spanEndV}
\pi(A)^{i_0} \pi(B)^{j_0} \pi(C)^{k_0}
\quad 
\hbox{for all $i_0,j_0,k_0\in\{0,1,\ldots,\dbar-1\}$ with $i_0j_0k_0=0$}
\end{gather}
over $\F$. The number of (\ref{e:spanEndV}) is 
$\dbar^3-(\dbar-1)^3=3\dbar^2-3\dbar+1$. 
Therefore 
$$
\dim V=\sqrt{\dim {\rm End}(V)}\leq \sqrt{3\dbar^2-3\dbar+1}.
$$ 
The proposition follows.
\end{proof}

\section{The $q$-Racah sequences}\label{s:qRacah}

In this section we show some properties of the $q$-Racah sequences and divide the $q$-Racah sequences into five types.

\begin{lem}\label{lem:theta}
Let $\{\theta_i\}_{i\in \Z/\dbar \Z}$ denote a $q$-Racah sequence. 
For all $i\in \Z/\dbar \Z$ the following {\rm (i)}, {\rm (ii)} hold:
\begin{enumerate}
\item $\theta_{i-1}+\theta_{i+1}=(q^{2}+q^{-2})\theta_{i}$.

\item $\theta_{i-1}\theta_{i+1}=\theta_i^2+(q^2-q^{-2})^2$.
\end{enumerate}
\end{lem}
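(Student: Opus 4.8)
The plan is to prove both identities by direct substitution of the defining formula $\theta_i = aq^{-2i} + a^{-1}q^{2i}$ from (\ref{qRacah_seq}); no structural theory is needed, only careful bookkeeping of the powers of $q$. The key simplification I would introduce at the outset is to abbreviate $u = aq^{-2i}$ and $v = a^{-1}q^{2i}$, so that $\theta_i = u + v$ while $uv = 1$. Shifting the index by one merely multiplies these monomials by $q^{\pm 2}$: explicitly $\theta_{i-1} = uq^2 + vq^{-2}$ and $\theta_{i+1} = uq^{-2} + vq^2$.

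For (i), adding the two shifted expressions gives $\theta_{i-1} + \theta_{i+1} = u(q^2 + q^{-2}) + v(q^2 + q^{-2})$, and factoring out $q^2 + q^{-2}$ recovers $(q^2 + q^{-2})(u+v) = (q^2+q^{-2})\theta_i$. This is immediate. For (ii), I would expand the product $\theta_{i-1}\theta_{i+1} = (uq^2 + vq^{-2})(uq^{-2} + vq^2)$ into $u^2 + v^2 + uv(q^4 + q^{-4})$. Using $uv = 1$ collapses the cross term to $q^4 + q^{-4}$, while $\theta_i^2 = (u+v)^2 = u^2 + v^2 + 2$ lets me replace $u^2 + v^2$ by $\theta_i^2 - 2$. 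The result is $\theta_{i-1}\theta_{i+1} = \theta_i^2 - 2 + q^4 + q^{-4}$, which matches the claim once one recognizes the identity $(q^2 - q^{-2})^2 = q^4 + q^{-4} - 2$.

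The only point requiring attention---rather than a genuine obstacle---is the relation $uv = 1$, which is exactly what makes the cross terms in (ii) independent of $a$ and collapse to the symmetric combination $q^4 + q^{-4}$; without exploiting it the computation would not simplify. Since the identities are indexed by $\Z/\dbar \Z$, I would also record that both sides are well defined: because $q^{2\dbar} = 1$, the formula for $\theta_i$ depends only on the residue of $i$ modulo $\dbar$, so the shifts $i \pm 1$ are unambiguous and the equalities hold uniformly over all residue classes.
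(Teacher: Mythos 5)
Your proof is correct and takes the same route as the paper, whose proof simply states that (i) and (ii) follow by direct verification from the definition $\theta_i = aq^{-2i}+a^{-1}q^{2i}$; you have merely written out the straightforward computation (with the tidy substitution $u=aq^{-2i}$, $v=a^{-1}q^{2i}$, $uv=1$) that the paper leaves implicit. No issues.
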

\begin{proof}
It is straightforward to verify (i), (ii) by using (\ref{qRacah_seq}).
\end{proof}

\begin{lem}\label{lem:theta^2}
Let $\{\theta_i\}_{i\in \Z/\dbar \Z}$ denote a $q$-Racah sequence. 
For each $i\in \Z/\dbar \Z$ the following are equivalent: 
\begin{enumerate}
\item $\theta_{i-1}=\theta_{i+1}$.

\item $(q^2+q^{-2})\theta_{i}=2\theta_{i-1}$. 

\item $(q^2+q^{-2})\theta_{i}=2\theta_{i+1}$. 

\item $\theta_i\in\{\pm 2\}$.
\end{enumerate}
\end{lem}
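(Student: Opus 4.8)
The plan is to reduce all four conditions to the single relation $(aq^{-2i})^2=1$, where $a$ is the nonzero scalar defining the sequence through (\ref{qRacah_seq}). I would first handle the equivalences among (i), (ii), (iii), which are formal consequences of Lemma \ref{lem:theta}(i), and then establish (i) $\Leftrightarrow$ (iv) by a direct computation with the closed form (\ref{qRacah_seq}).

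For (i), (ii), (iii): Lemma \ref{lem:theta}(i) gives the identity $(q^2+q^{-2})\theta_i=\theta_{i-1}+\theta_{i+1}$. Substituting this into the left-hand side of (ii) shows that (ii) is equivalent to $\theta_{i-1}+\theta_{i+1}=2\theta_{i-1}$, i.e.\ to $\theta_{i+1}=\theta_{i-1}$, which is (i). The same substitution turns (iii) into $\theta_{i-1}+\theta_{i+1}=2\theta_{i+1}$, again equivalent to (i). Since this step only rearranges an identity, it is valid in every characteristic.

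For (i) $\Leftrightarrow$ (iv): writing (i) out via (\ref{qRacah_seq}) and collecting terms yields $(q^2-q^{-2})(aq^{-2i}-a^{-1}q^{2i})=0$. The standing hypothesis $d\neq 1,2,4$ forces $q^4\neq 1$, hence $q^2-q^{-2}\neq 0$, and cancelling it shows (i) is equivalent to $aq^{-2i}=a^{-1}q^{2i}$, that is to $u^2=1$ where $u:=aq^{-2i}$. On the other hand $\theta_i=u+u^{-1}$, and $u+u^{-1}=\pm 2$ is equivalent to $(u\mp 1)^2=0$, hence to $u=\pm 1$, i.e.\ again to $u^2=1$. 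Thus (i) and (iv) are both equivalent to $u^2=1$, and the chain of equivalences is complete.

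The computation is routine, and I expect the only delicate points to be the appeal to $q^2-q^{-2}\neq 0$ (precisely where the hypothesis $d\neq 1,2,4$ is used) and keeping the (i)--(ii)--(iii) step free of any division by $2$, which is why I phrase it as a rearrangement of Lemma \ref{lem:theta}(i) rather than solving for $\theta_i$. As an alternative derivation of (i) $\Rightarrow$ (iv) bypassing (\ref{qRacah_seq}), one may set $\theta_{i-1}=\theta_{i+1}=t$, combine Lemma \ref{lem:theta}(i),(ii) with the identity $(q^2+q^{-2})^2-4=(q^2-q^{-2})^2$ to eliminate $t$, and obtain $(q^2-q^{-2})^2\theta_i^2=4(q^2-q^{-2})^2$, whence $\theta_i^2=4$; I would keep this in reserve but prefer the explicit computation, since it settles both directions uniformly and in all characteristics.
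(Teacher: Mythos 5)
Your proof is correct and takes essentially the same approach as the paper: the paper likewise reduces everything to the normalized parametrization $(\theta_{i-1},\theta_i,\theta_{i+1})=(aq^{2}+a^{-1}q^{-2},\,a+a^{-1},\,aq^{-2}+a^{-1}q^{2})$ (your $u=aq^{-2i}$) and observes that each of (i)--(iv) holds if and only if $a\in\{\pm 1\}$. Your only deviation---routing (ii) and (iii) through Lemma \ref{lem:theta}(i) instead of checking them directly against $u=\pm 1$---is a cosmetic reorganization, not a different argument.
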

\begin{proof}
Since $\{\theta_i\}_{i\in \Z/\dbar \Z}$ is a $q$-Racah sequence  there exists a nonzero scalar $a\in \F$ such that 
\begin{align}\label{e:pm2}
(\theta_{i-1},\theta_i,\theta_{i+1})=(a q^2+a^{-1} q^{-2},a+a^{-1},a q^{-2}+a^{-1} q^{2}).
\end{align}
Using (\ref{e:pm2}) yields that each of (i)--(iv) holds if and only if $a\in \{\pm 1\}$. Therefore (i)--(iv) are equivalent. 
\end{proof}

Given any two sequences $\{\theta_i\}_{i\in \Z/\dbar \Z}$ and $\{\theta'_i\}_{i\in \Z/\dbar \Z}$ in $\F$, we say that $\{\theta_i\}_{i\in \Z/\dbar \Z}$ is {\it congruent} to $\{\theta'_i\}_{i\in \Z/\dbar \Z}$ if there exists an element $j\in \Z/\dbar \Z$ such that 
$$
\theta_i=\theta'_{i+j}
\qquad 
\hbox{for all $i\in \Z/\dbar \Z$}.
$$
Note that the congruence relation is an equivalence relation and the set of $q$-Racah sequences is closed under the congruence relation.

\begin{defn}\label{defn:DEO}
\begin{enumerate}
\item A $q$-Racah sequence $\{\theta_i\}_{i\in \Z/\dbar\Z}$ is said to be of {\it type $D$} if there exists a nonzero $a\in \F$ with $a^2 \not\in \{q^{2i}\,|\,i\in \Z/\dbar \Z\}$ such that 
$$
\theta_i=a q^{-2i}+a^{-1} q^{2i}
\qquad 
\hbox{for all $i\in  \Z/\dbar\Z$}.
$$

\item A $q$-Racah sequence $\{\theta_i\}_{i\in \Z/\dbar\Z}$ is said to be of {\it type $O(2)$} if $\dbar$ is odd and 
$$
\theta_i=q^{2i}+q^{-2i}
\qquad 
\hbox{for all $i\in  \Z/\dbar\Z$}.
$$

\item A $q$-Racah sequence $\{\theta_i\}_{i\in \Z/\dbar\Z}$ is said to be of {\it type $O(-2)$} if $\dbar$ is odd and 
$$
\theta_i=-q^{2i}-q^{-2i}
\qquad 
\hbox{for all $i\in  \Z/\dbar\Z$}.
$$

\item A $q$-Racah sequence $\{\theta_i\}_{i\in \Z/\dbar\Z}$ is said to be of {\it type $E(2)$} if $\dbar$ is even and 
$$
\theta_i=q^{2i}+q^{-2i}
\qquad 
\hbox{for all $i\in  \Z/\dbar\Z$}.
$$

\item A $q$-Racah sequence $\{\theta_i\}_{i\in \Z/\dbar\Z}$ is said to be of {\it type $E(q+q^{-1})$} if $\dbar$ is even and 
$$
\theta_i=q^{2i-1}+q^{1-2i}
\qquad 
\hbox{for all $i\in  \Z/\dbar\Z$}.
$$

\end{enumerate}
\end{defn}

\begin{prop}\label{prop:congruence}
Let $\{\theta_i\}_{i\in \Z/\dbar \Z}$ denote a $q$-Racah sequence. If $\{\theta_i\}_{i\in \Z/\dbar \Z}$ is not of type $D$, then $\{\theta_i\}_{i\in \Z/\dbar \Z}$ is congruent to the $q$-Racah sequence of type $O(2),O(-2),E(2)$ or $E(q+q^{-1})$.
\end{prop}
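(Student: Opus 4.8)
The plan is to parametrize the given $q$-Racah sequence and track how the congruence relation acts on the parameter. Write $\theta_i = a q^{-2i} + a^{-1}q^{2i}$ for some nonzero $a \in \F$ as in (\ref{qRacah_seq}). For any $s \in \Z/\dbar \Z$ the shifted sequence $\{\theta_{i+s}\}_{i \in \Z/\dbar \Z}$ is again a $q$-Racah sequence, now with parameter $a q^{-2s}$; and since $q^2$ has order $\dbar$, the factor $q^{-2s}$ ranges over the whole cyclic group $\langle q^2\rangle = \{q^{2i}\,|\,i \in \Z/\dbar \Z\}$ as $s$ varies. Hence $\{\theta_i\}_{i \in \Z/\dbar \Z}$ is congruent to the $q$-Racah sequence with parameter $a^*$ precisely when $a/a^* \in \langle q^2\rangle$, and I will use this criterion to match $\{\theta_i\}_{i \in \Z/\dbar \Z}$ against the named types, whose parameters are $1$ (types $O(2)$ and $E(2)$), $-1$ (type $O(-2)$) and $q$ (type $E(q+q^{-1})$). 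To invoke the hypothesis: were $a^2 \notin \langle q^2\rangle$, the chosen $a$ would exhibit $\{\theta_i\}_{i \in \Z/\dbar \Z}$ as a sequence of type $D$ in the sense of Definition \ref{defn:DEO}(i); since it is assumed not of type $D$, we must have $a^2 \in \langle q^2\rangle$, and therefore $a = \pm q^m$ for some integer $m$. The argument then splits on the parity of $\dbar$.

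For $\dbar$ odd the order of $q$ equals $d = \dbar$, so $\langle q^2\rangle = \langle q\rangle$ contains every power of $q$, whereas $-1 \notin \langle q\rangle$ because $d$ is odd. Thus if $a = q^m$ then $a \in \langle q^2\rangle$, so $\{\theta_i\}_{i \in \Z/\dbar \Z}$ is congruent to the sequence with parameter $1$, which is of type $O(2)$; and if $a = -q^m$ then $a/(-1) = q^m \in \langle q^2\rangle$, so $\{\theta_i\}_{i \in \Z/\dbar \Z}$ is congruent to the sequence with parameter $-1$, which is of type $O(-2)$.

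For $\dbar$ even the order of $q$ equals $d = 2\dbar$ and $q^{\dbar} = -1$; since $\dbar$ is even this yields $-1 = (q^2)^{\dbar/2} \in \langle q^2\rangle$. Consequently the sign of $a$ is immaterial modulo $\langle q^2\rangle$, and $a = \pm q^m$ lies in the coset $q^m\langle q^2\rangle$, which depends only on the parity of $m$ because the subgroup $\langle q^2\rangle$ of even powers has index $2$ in $\langle q\rangle$. If $m$ is even then $a \in \langle q^2\rangle$ and $\{\theta_i\}_{i \in \Z/\dbar \Z}$ is congruent to the parameter-$1$ sequence of type $E(2)$; if $m$ is odd then $a \in q\langle q^2\rangle$ and $\{\theta_i\}_{i \in \Z/\dbar \Z}$ is congruent to the parameter-$q$ sequence of type $E(q+q^{-1})$.

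I expect the only genuine subtlety to be the correct reading of the negative hypothesis in Definition \ref{defn:DEO}(i)—converting ``not representable by any $a$ with $a^2 \notin \langle q^2\rangle$'' into ``$a^2 \in \langle q^2\rangle$'' for the chosen parameter—together with the bookkeeping that separates the two regimes. For odd $\dbar$ the element $-1$ lies outside $\langle q^2\rangle$, so the sign of $a$ is a true invariant yielding the two types $O(\pm 2)$; for even $\dbar$ the element $-1$ lies inside $\langle q^2\rangle$, so instead the parity of the exponent $m$ becomes the invariant, yielding $E(2)$ and $E(q+q^{-1})$. Once this is seen, checking that each target parameter reproduces exactly the sequence named in Definition \ref{defn:DEO} is a one-line substitution.
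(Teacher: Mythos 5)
Your general framework---tracking the parameter $a$ of the $q$-Racah sequence modulo the subgroup $\langle q^2\rangle$ of $\F^\times$, converting ``not of type $D$'' into $a^2\in\langle q^2\rangle$, hence $a=\pm q^m$---is sound, and your even case is correct. But the odd case contains a genuine error: you assert that ``for $\dbar$ odd the order of $q$ equals $d=\dbar$.'' This conflates ``$\dbar$ is odd'' with ``$d$ is odd.'' When $d\equiv 2\pmod 4$ (e.g.\ $d=6$, which is permitted since $d\neq 1,2,4$), $\dbar=d/2$ is odd while $q$ has order $d=2\dbar$. In that sub-case $\langle q^2\rangle$ has index $2$ in $\langle q\rangle$, and $-1=q^{\dbar}$ lies in $\langle q\rangle$ but not in $\langle q^2\rangle$; both of your claims ``$\langle q^2\rangle=\langle q\rangle$'' and ``$-1\notin\langle q\rangle$'' fail, and so do the resulting type assignments. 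Concretely, for $d=6$ and $a=q$ your recipe ($a=q^m$, hence type $O(2)$) gives the wrong answer: $q\notin\langle q^2\rangle=\{1,q^2,q^4\}$, while $q=-q^4\in(-1)\langle q^2\rangle$, so the sequence $\theta_i=q^{1-2i}+q^{2i-1}$ is congruent to the type $O(-2)$ sequence, not to the type $O(2)$ sequence.

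The proposition itself survives in this sub-case---since $-1=q^{\dbar}$ with $\dbar$ odd, one has $\langle q\rangle=\langle q^2\rangle\sqcup(-1)\langle q^2\rangle$, so $\pm q^m$ always lies in $\pm\langle q^2\rangle$ and the sequence is congruent to type $O(2)$ or $O(-2)$---but this is an argument your proof does not make. The paper avoids the trap: in its odd-$\dbar$ case it uses only $q^{\dbar}\in\{\pm1\}$ and splits according to whether $q^{\dbar}=1$ (i.e.\ $d$ odd) or $q^{\dbar}=-1$ (i.e.\ $d=2\dbar$), sorting the candidate sequences into $O(2)$ or $O(-2)$ accordingly. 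To repair your proof, replace the false identity $\langle q^2\rangle=\langle q\rangle$ by exactly this dichotomy on $q^{\dbar}$. (A smaller blemish: ``$-1\notin\langle q\rangle$ because $d$ is odd'' also fails in characteristic $2$, where $-1=1$; that one is harmless, since types $O(2)$ and $O(-2)$ then coincide.)
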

\begin{proof}
Since $\{\theta_i\}_{i\in \Z/\dbar \Z}$ is not of type $D$ it follows from Definition \ref{defn:DEO}(i) that $\{\theta_i\}_{i\in \Z/\dbar \Z}$ is congruent to one of the following sequences:
\begin{gather}
\{q^{2i}+q^{-2i}\}_{i\in \Z/\dbar \Z},
\label{c1}
\\
\{-q^{2i}-q^{-2i}\}_{i\in \Z/\dbar \Z},
\label{c2}
\\
\{q^{2i-1}+q^{1-2i}\}_{i\in \Z/\dbar \Z},
\label{c3}
\\
\{-q^{2i-1}-q^{1-2i}\}_{i\in \Z/\dbar \Z}.
\label{c4}
\end{gather} 
We divide the argument into the two cases: (i) $\dbar$ is odd; (ii) $\dbar$ is even.

(i): By Definition \ref{defn:DEO}(ii) the sequence (\ref{c1}) is of type $O(2)$. By Definition \ref{defn:DEO}(iii) the sequence (\ref{c2}) is of type $O(-2)$.  Let $\{\theta_i'\}_{i\in \Z/\dbar \Z}$ denote the sequence (\ref{c3}). Observe that 
$$
\theta_{i+\frac{\dbar+1}{2}}'
=
q^{2i+\dbar}+q^{-\dbar-2i}
\qquad 
\hbox{for all $i\in \Z/\dbar \Z$}.
$$
Since $\dbar$ is the order of $q^2$, the scalar $q^{\dbar}\in\{\pm 1\}$. 
It follows that (\ref{c3}) is congruent to the $q$-Racah sequence of type $O(2)$ if $q^{\dbar}=1$; the $q$-Racah sequence of type $O(-2)$ if $q^{\dbar}=-1$.
Similarly (\ref{c4}) is congruent to the $q$-Racah sequence of type $O(-2)$ if $q^{\dbar}=1$; the $q$-Racah sequence of type $O(2)$ if $q^{\dbar}=-1$. 
Therefore $\{\theta_i\}_{i\in \Z/\dbar \Z}$ is congruent to the $q$-Racah sequence of type $O(2)$ or $O(-2)$. 

(ii): By Definition \ref{defn:DEO}(iv) the $q$-Racah sequence (\ref{c1}) is of type $E(2)$. By Definition \ref{defn:DEO}(v) the $q$-Racah sequence (\ref{c3}) is of type $E(q+q^{-1})$. Let $\{\theta_i'\}_{i\in \Z/\dbar \Z}$ denote the sequence (\ref{c2}).
Observe that $q^{\dbar}=-1$ whenever $\dbar$ is even.  Hence 
$\theta_i'=q^{2i-\dbar}+q^{\dbar-2i}$ for all $i\in \Z/\dbar \Z$. 
It follows that  
$$
\theta_{i+\frac{\dbar}{2}}'=q^{2i}+q^{-2i}
\qquad 
\hbox{for all $i\in \Z/\dbar \Z$}.
$$
This implies that $\{\theta_i'\}_{i\in \Z/\dbar \Z}$ is congruent to the $q$-Racah sequence of type $E(2)$. By a similar argument  the sequence (\ref{c4}) is congruent to the $q$-Racah sequence of type $E(q+q^{-1})$. Therefore $\{\theta_i\}_{i\in \Z/\dbar \Z}$ is congruent to the $q$-Racah sequence of type $E(2)$ or $E(q+q^{-1})$. The proposition follows.
\end{proof}

\begin{prop}\label{prop:multi_DEO}
\begin{enumerate}
\item If $\{\theta_i\}_{i\in \Z/\dbar \Z}$ is a $q$-Racah sequence of type $D$ then the scalars $\{\theta_i\}_{i\in \Z/\dbar \Z}$ are mutually distinct.

\item If $\{\theta_i\}_{i\in \Z/\dbar \Z}$ is the $q$-Racah sequence of type $E(2), O(2)$ or $O(-2)$ then $\theta_i=\theta_j$ if and only if $i\in\{j,-j\}$ for any $i,j\in \Z/\dbar \Z$.

\item If $\{\theta_i\}_{i\in \Z/\dbar \Z}$ is the $q$-Racah sequence of type $E(q+q^{-1})$ then  $\theta_i=\theta_j$ if and only if $i\in\{j,1-j\}$ for any $i,j\in \Z/\dbar \Z$.
\end{enumerate}
\end{prop}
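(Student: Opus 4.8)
The plan is to reduce all three parts to a single factorization of the difference $\theta_i-\theta_j$ of two terms of a $q$-Racah sequence, and then to read off each case from the condition under which one of the factors vanishes. First I would write $\theta_i=aq^{-2i}+a^{-1}q^{2i}$ for the nonzero parameter $a$ attached to the sequence, and verify by multiplying out the right-hand side the identity
$$
\theta_i-\theta_j=a^{-1}q^{-2(i+j)}\,(q^{2i}-q^{2j})\,(q^{2(i+j)}-a^2)
\qquad\hbox{for all }i,j\in\Z/\dbar\Z.
$$
Since the prefactor $a^{-1}q^{-2(i+j)}$ is nonzero, this shows that $\theta_i=\theta_j$ holds if and only if $q^{2i}=q^{2j}$ or $q^{2(i+j)}=a^2$. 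Because $q^2$ has order $\dbar$, the first alternative is equivalent to $i=j$ in $\Z/\dbar\Z$, so the entire content of the proposition lies in deciding, for each type, exactly which $i+j\in\Z/\dbar\Z$ satisfy $q^{2(i+j)}=a^2$.

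For part (i), the type $D$ hypothesis is precisely that $a^2\notin\{q^{2k}\mid k\in\Z/\dbar\Z\}$, so $q^{2(i+j)}=a^2$ is impossible and $\theta_i=\theta_j$ forces $i=j$, giving mutual distinctness. For part (ii), each of the types $E(2),O(2),O(-2)$ has $a\in\{1,-1\}$ and hence $a^2=1=q^0$; thus $q^{2(i+j)}=a^2$ holds exactly when $i+j\equiv 0$, that is $j=-i$, which together with the first alternative yields $i\in\{j,-j\}$. For part (iii), the type $E(q+q^{-1})$ sequence $\theta_i=q^{2i-1}+q^{1-2i}$ corresponds to $a=q$, so $a^2=q^2$ and $q^{2(i+j)}=a^2$ holds exactly when $i+j\equiv 1$, that is $j=1-i$, giving $i\in\{j,1-j\}$. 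In each case the factorization delivers the equivalence in both directions at once, and the stated set membership is symmetric in $i,j$, so no separate converse argument is needed.

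I do not expect a genuine obstacle here: the one computation carrying the argument is the factorization identity above, and everything else is bookkeeping of the parameter $a$ for each named type together with the elementary fact that $q^{2i}=q^{2j}\iff i=j$ in $\Z/\dbar\Z$. The only points requiring minor care are matching each type to its parameter $a$ — in particular recognizing that $E(q+q^{-1})$ corresponds to $a=q$ rather than $a=1$ — and observing that the degenerate possibility $-i=i$ (respectively $1-i=i$) merely collapses the two-element set $\{j,-j\}$ (respectively $\{j,1-j\}$) to a singleton without affecting the equivalence.
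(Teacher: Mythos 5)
Your proposal is correct and takes essentially the same route as the paper: both factor $\theta_i-\theta_j$ into two factors (up to a nonzero prefactor) and determine exactly when each factor vanishes, using that $q^2$ has order $\dbar$ for the factor $q^{2i}-q^{2j}$ and the value of the parameter $a$ for the other factor. The only difference is cosmetic --- you keep $a$ generic and treat all types through one uniform identity, whereas the paper substitutes the specific $a$ for each type and writes the corresponding factorization case by case.
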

\begin{proof}
(i): Using Definition \ref{defn:DEO}(i) yields that $\theta_i-\theta_j$ is equal to 
\begin{gather}\label{e:mult_D}
(q^{i-j}-q^{j-i})(a^{-1} q^{i+j}-a q^{-i-j})
\end{gather}
for $i,j\in \Z/\dbar\Z$. Since the root of unity $q^2$ is of order $\dbar$, the first multiplicand  of (\ref{e:mult_D}) is zero if and only if $i=j$. Since $a^2 \not\in \{q^{2i}\,|\,i\in \Z/\dbar \Z\}$ the second multiplicand of (\ref{e:mult_D}) is nonzero. The statement (i) follows.

(ii): Using Definition \ref{defn:DEO}(ii)--(iv) yields that 
$\theta_i-\theta_j$ is equal to 
\begin{gather}\label{e:mult_E(2)}
\pm (q^{i-j}-q^{j-i})(q^{i+j}-q^{-i-j})
\end{gather}
for $i,j\in \Z/\dbar\Z$. Observe that the first multiplicand  of (\ref{e:mult_E(2)}) is zero if and only if $i=j$; the second multiplicand  of (\ref{e:mult_E(2)}) is zero if and only if $i=-j$. The statement (ii) follows.

(iii): Using Definition \ref{defn:DEO}(v) yields that 
$\theta_i-\theta_j$ is equal to 
\begin{gather}\label{e:mult_E(q)}
(q^{i-j}-q^{j-i})(q^{i+j-1}-q^{1-i-j})
\end{gather}
for $i,j\in \Z/\dbar\Z$.  Observe that the first multiplicand  of (\ref{e:mult_E(q)}) is zero if and only if $i=j$; the second multiplicand  of (\ref{e:mult_E(q)}) is zero if and only if $i=1-j$. The statement (iii) follows.
\end{proof}

\section{The decompositions of finite-dimensional irreducible $\triangle_q$-modules}\label{s:decomposition}

For the rest of this paper, we adopt the following notations: Let $V$ denote a finite-dimensional irreducible $\triangle_q$-module. For all integers $n\geq 1$ and all $\theta\in \F$, let 
$$
V^{(n)}(\theta)=\{v\in V\,|\, (B-\theta)^n v=0\}
$$
and we simply write $V(\theta)= V^{(1)}(\theta)$. Note that $V^{(n)}(\theta)$ is $B$-invariant for all $n\geq 1$ and all $\theta\in \F$. Let $\{\theta_i\}_{i\in \Z/\dbar\Z}$ stand for a $q$-Racah sequence such that $V(\theta_i)\not=\{0\}$ for some $i\in \Z/\dbar \Z$. Since $\F$ is algebraically closed, the sequence $\{\theta_i\}_{i\in \Z/\dbar\Z}$ exists.

\begin{lem}\label{lem:vanish}
The element
\begin{gather}\label{vanish}
\prod\limits_{i\in \Z/\dbar \Z} (B-\theta_i)
\end{gather}
vanishes on $V$.
\end{lem}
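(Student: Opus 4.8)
The plan is to combine the centrality of the displayed product with Schur's lemma, reducing the claim to showing that a single scalar vanishes. By Lemma~\ref{lem:q-Racah&center}, since $\{\theta_i\}_{i\in \Z/\dbar\Z}$ is a $q$-Racah sequence, the element
$$
P=\prod_{i\in \Z/\dbar \Z}(B-\theta_i)
$$
is central in $\triangle_q$. Because $V$ is a finite-dimensional irreducible $\triangle_q$-module, Lemma~\ref{lem:Schur} guarantees that $P$ acts on $V$ as multiplication by some scalar $\lambda\in\F$. Thus the lemma is equivalent to the assertion that $\lambda=0$.

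To see that $\lambda=0$, I would use the hypothesis built into the choice of the sequence: there exists an index $i\in \Z/\dbar\Z$ with $V(\theta_i)\not=\{0\}$. Fix a nonzero vector $v\in V(\theta_i)$, so that $(B-\theta_i)v=0$. Each factor $(B-\theta_j)$ appearing in $P$ is a polynomial in $B$, so all of these factors commute with one another. Reordering the product so that the factor $(B-\theta_i)$ is applied to $v$ first, I conclude that $Pv=0$. Comparing this with $Pv=\lambda v$ and using $v\not=0$ forces $\lambda=0$.

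There is no substantive obstacle here; the only point requiring a moment's care is the commutativity of the factors, which is what licenses moving the annihilating factor $(B-\theta_i)$ into position to act directly on $v$ regardless of the indexing order. Once $\lambda=0$ is established, $P$ acts as the zero operator on all of $V$, which is precisely the statement that the element in~(\ref{vanish}) vanishes on $V$.
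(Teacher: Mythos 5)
Your proof is correct and follows essentially the same route as the paper: centrality of the product (Lemma~\ref{lem:q-Racah&center}), Schur's lemma to reduce to a scalar, and the choice of $\{\theta_i\}_{i\in\Z/\dbar\Z}$ to produce a nonzero vector annihilated by the product, forcing that scalar to be zero. Your explicit remark that the factors commute (being polynomials in $B$) is a detail the paper leaves implicit, but it changes nothing of substance.
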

\begin{proof}
Recall from Lemma \ref{lem:q-Racah&center} that (\ref{vanish}) is central in $\triangle_q$. Combined with Lemma \ref{lem:Schur} the element (\ref{vanish}) acts on $V$ as scalar multiplication. By the choice of $\{\theta_i\}_{i\in \Z/\dbar \Z}$ there exists an $i\in \Z/\dbar \Z$ such that $V(\theta_i)\not=\{0\}$. For any nonzero $v\in V(\theta_i)$, the operator (\ref{vanish}) sends $v$ to zero. It follows that (\ref{vanish}) acts on $V$ as scalar multiplication by zero. The lemma follows.
\end{proof}

In the following we describe the decomposition of $V$ with respect to the types of $\{\theta_i\}_{i\in \Z/\dbar\Z}$.

\begin{prop}\label{prop:decomposition}
\begin{enumerate}
\item If $\{\theta_i\}_{i\in \Z/\dbar \Z}$ is of type $D$ then 
$$
V=\bigoplus\limits_{i=0}^{\dbar-1} V(\theta_i).
$$

\item If $\{\theta_i\}_{i\in \Z/\dbar \Z}$ is of type $O(2)$ or $O(-2)$ then 
$$
V=V(\theta_0)\oplus \bigoplus\limits_{i=1}^{\frac{\dbar-1}{2}} V^{(2)}(\theta_i).
$$

\item If $\{\theta_i\}_{i\in \Z/\dbar \Z}$ is of type $E(2)$ then 
$$
V=V(\theta_0)\oplus 
\bigoplus\limits_{i=1}^{\frac{\dbar}{2}-1} V^{(2)}(\theta_i)
\oplus V(\theta_\frac{\dbar}{2}).
$$

\item If $\{\theta_i\}_{i\in \Z/\dbar \Z}$ is of type $E(q+q^{-1})$ then 
$$
V=\bigoplus\limits_{i=1}^{\frac{\dbar}{2}} V^{(2)}(\theta_i).
$$ 
\end{enumerate}
\end{prop}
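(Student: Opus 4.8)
The plan is to decompose $V$ using the action of $B$, whose spectrum is controlled by the $q$-Racah sequence $\{\theta_i\}$. First I would recall from Lemma \ref{lem:vanish} that the central element $\prod_{i\in\Z/\dbar\Z}(B-\theta_i)$ vanishes on $V$. Since $\F$ is algebraically closed and this polynomial annihilates $B$, the minimal polynomial of $B$ divides $\prod_{i}(x-\theta_i)$, so every eigenvalue of $B$ on $V$ lies in $\{\theta_i\}_{i\in\Z/\dbar\Z}$, and $V$ decomposes as a direct sum of generalized eigenspaces $V^{(m)}(\theta)$ over the \emph{distinct} values $\theta$ among the $\theta_i$. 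The key point is that the \emph{algebraic multiplicity of each root} $\theta_i$ in the annihilating polynomial bounds the nilpotency index of $(B-\theta_i)$ on the corresponding generalized eigenspace. Thus the whole argument reduces to a combinatorial bookkeeping of how many times each distinct value appears among $\{\theta_i\}$, which is exactly the content of Proposition \ref{prop:multi_DEO}.

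The four cases then fall out by applying Proposition \ref{prop:multi_DEO} to the respective type. In type $D$ (part (i)), Proposition \ref{prop:multi_DEO}(i) says the $\theta_i$ are mutually distinct, so each appears with multiplicity one in the annihilating polynomial; hence $(B-\theta_i)$ is nilpotent of index $1$ on its generalized eigenspace, i.e. each generalized eigenspace equals the ordinary eigenspace $V(\theta_i)$, and $V=\bigoplus_{i=0}^{\dbar-1}V(\theta_i)$. For types $O(2),O(-2),E(2)$ (parts (ii),(iii)), Proposition \ref{prop:multi_DEO}(ii) gives $\theta_i=\theta_j$ iff $j\in\{i,-i\}$: the fixed points $\theta_0$ (and $\theta_{\dbar/2}$ when $\dbar$ is even) are simple roots, forcing $V(\theta_0)$ (resp. $V(\theta_{\dbar/2})$) to be an honest eigenspace, while every other value $\theta_i$ ($1\le i\le\frac{\dbar-1}{2}$ or $\frac{\dbar}{2}-1$) is a double root, so $(B-\theta_i)^2$ annihilates its generalized eigenspace, giving the $V^{(2)}(\theta_i)$ summands. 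Part (iv) is identical using Proposition \ref{prop:multi_DEO}(iii): when $\dbar$ is even the involution $j\mapsto 1-j$ on $\Z/\dbar\Z$ has no fixed point (since $2j=1$ has no solution mod the even $\dbar$), so \emph{every} value is a double root and $V=\bigoplus_{i=1}^{\dbar/2}V^{(2)}(\theta_i)$.

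The one step deserving genuine care is the passage from ``multiplicity of $\theta_i$ as a root of the annihilating polynomial'' to ``nilpotency index of $B-\theta_i$ on the generalized eigenspace.'' Since $\prod_{i}(B-\theta_i)=0$ on $V$, grouping equal factors shows $B$ satisfies $\prod_{\theta}(x-\theta)^{m_\theta}=0$ where $m_\theta$ is the number of indices $i$ with $\theta_i=\theta$; by the structure theory of a single linear operator, $(B-\theta)^{m_\theta}$ annihilates $V^{(m_\theta)}(\theta)=V(\theta)\text{-generalized}$, so the generalized eigenspace coincides with $V^{(m_\theta)}(\theta)$. I would verify that the index sets in each part range over a complete set of representatives for the distinct values (e.g. that $\{0,1,\ldots,\frac{\dbar-1}{2}\}$ lists each distinct $\theta$ exactly once in the odd case, using the orbit structure of $i\mapsto -i$), so the direct sum in each statement is precisely the generalized-eigenspace decomposition with no repetition or omission. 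The main obstacle is thus not analytic but purely this indexing check, which Proposition \ref{prop:multi_DEO} has already set up; once the orbit representatives are pinned down, the decompositions are immediate.
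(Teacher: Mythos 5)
Your proposal is correct and is essentially the paper's own argument: combine Lemma \ref{lem:vanish} (the polynomial $\prod_{i\in\Z/\dbar\Z}(B-\theta_i)$ annihilates $V$) with the multiplicity count of Proposition \ref{prop:multi_DEO} and the primary (generalized eigenspace) decomposition of $V$ under $B$. The only difference is that you spell out explicitly the standard linear-algebra step and the orbit-representative bookkeeping that the paper leaves implicit.
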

\begin{proof}
(i): By Proposition \ref{prop:multi_DEO}(i) the scalars $\{\theta_i\}_{i\in \Z/\dbar \Z}$ are mutually distinct. Combined with Lemma \ref{lem:vanish} this implies (i).

(ii): By Proposition \ref{prop:multi_DEO}(ii) and since $\dbar$ is odd, the scalars $\theta_0$ and $\{\theta_i\}_{i=1}^{\frac{\dbar-1}{2}}$ are all distinct values in the sequence $\{\theta_i\}_{i\in \Z/\dbar \Z}$. Moreover $\theta_0$ is of multiplicity $1$ and each of $\{\theta_i\}_{i=1}^{\frac{\dbar-1}{2}}$ is of multiplicity $2$ in $\{\theta_i\}_{i\in \Z/\dbar \Z}$. Combined with Lemma \ref{lem:vanish} the statement (ii) follows.

(iii), (iv): Similar to the proof of (ii).
\end{proof}

\section{The operators associated with $q$-Racah sequences}\label{s:KTE}

Define
\begin{align*}
K_i^{(n)}&=
(B-\theta_{i-1})^n
(B-\theta_{i+1})^n A,
\\
T_i^{(n)}&=
(B-\theta_{i-1})^n
(B-\theta_i)^n
(B-\theta_{i+1})^n A,
\\
E_i^{(n)}&=
(B-\theta_i)^n
(B-\theta_{i+1})^n
A
\end{align*}
for all $i\in \Z/\dbar \Z$ and $n=1,2$.
In this section we investigate the actions of the above operators on $V$. 
For short we write $K_i=K_i^{(1)}, T_i=T_i^{(1)}, E_i=E_i^{(1)}$ for all $i\in \Z/\dbar \Z$.

\begin{prop}\label{prop:EKT}
For all $i\in \Z/\dbar \Z$ the following hold:
\begin{enumerate}
\item $
K_i V(\theta_i)\subseteq V(\theta_i) 
$
and $K_i$ acts on $V(\theta_i)$ as scalar multiplication.

\item $
T_i
$ vanishes on $V(\theta_i)$. 

\item $
E_i V(\theta_i)\subseteq V(\theta_{i-1}).
$
\end{enumerate}
\end{prop}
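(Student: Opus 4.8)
The plan is to reduce all three parts to a single computation showing that $K_i$ acts as a scalar on $V(\theta_i)$; parts (ii) and (iii) will then follow formally from the factorizations $T_i=(B-\theta_i)K_i$ and $(B-\theta_{i-1})E_i=T_i$, both of which hold simply because polynomials in $B$ commute.

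To handle the core claim (i), I would fix a nonzero $v\in V(\theta_i)$, so that $Bv=\theta_i v$, and set $w=Av$. The key decision is which defining relation to invoke: the relation (\ref{e:b->ABc}) for $\beta$ is quadratic in $A$ and would force me to control $A^2$, whereas the relation (\ref{e:a->ABc}) for $\alpha$ is quadratic in $B$ and only linear in $A$. Applying (\ref{e:a->ABc}) to $v$ and using that $\alpha,\gamma$ act as scalars on $V$ by Lemma \ref{lem:Schur}, every $B$ that hits $v$ collapses to $\theta_i$, and I am left with an identity of the form
$$B^2 w-(q^2+q^{-2})\theta_i\,Bw+\bigl(\theta_i^2+(q^2-q^{-2})^2\bigr)w=\mu\,v$$
for an explicit scalar $\mu$ depending only on $\alpha,\gamma,\theta_i$.

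The decisive step is to recognize the left-hand side. By Lemma \ref{lem:theta}(i),(ii) the coefficients satisfy $(q^2+q^{-2})\theta_i=\theta_{i-1}+\theta_{i+1}$ and $\theta_i^2+(q^2-q^{-2})^2=\theta_{i-1}\theta_{i+1}$, so the left-hand side factors as $(B-\theta_{i-1})(B-\theta_{i+1})w=(B-\theta_{i-1})(B-\theta_{i+1})Av=K_iv$. Hence $K_iv=\mu v$, which shows simultaneously that $K_i$ preserves $V(\theta_i)$ and that it acts there as scalar multiplication by $\mu$, establishing (i). For (ii) I apply $(B-\theta_i)$ to this identity: since $T_i=(B-\theta_i)K_i$ and $(B-\theta_i)v=0$, I obtain $T_iv=\mu(B-\theta_i)v=0$. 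For (iii) I use $(B-\theta_{i-1})E_i=T_i$, whence $(B-\theta_{i-1})E_iv=T_iv=0$, i.e.\ $E_iv\in V(\theta_{i-1})$.

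I do not anticipate a serious obstacle: once the correct relation (\ref{e:a->ABc}) is selected, the rest is direct substitution. The only points requiring care are the bookkeeping in the substitution---tracking which copies of $B$ act on $v$ (and become $\theta_i$) versus which remain attached to $w=Av$---and the observation, via Lemma \ref{lem:theta}, that the surviving second-degree expression in $B$ is precisely the factored operator defining $K_i$. The scalar $\mu$ plays no role in the statement, so I would not simplify it beyond noting that it is independent of the chosen $v$.
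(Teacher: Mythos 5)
Your proposal is correct and follows essentially the same route as the paper's proof: apply (\ref{e:a->ABc}) to $v\in V(\theta_i)$, collapse every $B$ acting directly on $v$ to $\theta_i$, use Lemma \ref{lem:Schur} to turn the $\alpha,\gamma$ terms into a scalar multiple of $v$ independent of $v$, and factor the resulting quadratic in $B$ via Lemma \ref{lem:theta} to obtain $K_iv=\mu v$. Parts (ii) and (iii) are then deduced exactly as in the paper, from the factorizations $T_i=(B-\theta_i)K_i$ and $T_i=(B-\theta_{i-1})E_i$.
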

\begin{proof}
(i): Pick any $v\in V(\theta_i)$. Applying $v$ to either side of (\ref{e:a->ABc}) yields  that 
\begin{gather}\label{e:EF}
(B^2-(q^2+q^{-2})\theta_i B+\theta_i^2+(q^2-q^{-2})^2) A v=(q-q^{-1})(q^2-q^{-2})\alpha v-(q-q^{-1})^2\theta_i \gamma v.
\end{gather}
By Lemma \ref{lem:theta} the left-hand side of (\ref{e:EF}) can be factored into 
$$
(B-\theta_{i-1})(B-\theta_{i+1})Av=K_i v.
$$
By Lemma \ref{lem:Schur} the right-hand side of (\ref{e:EF}) is a multiple of $v$ by a scalar which is independent of the choice of $v\in V(\theta_i)$. Therefore (i) follows.

(ii): Since $T_i=(B-\theta_i) K_i$ the statement (ii) is immediate from (i).

(iii): Since $T_i=(B-\theta_{i-1}) E_i$ the statement (iii) is immediate from (ii).
\end{proof}

\begin{lem}\label{lem:A}
For all $i\in \Z/\dbar \Z$ the $\F$-subspace $A V(\theta_i)$ of $V$ is contained in 
$$
\left\{
\begin{array}{ll}
V(\theta_{i-1})+V(\theta_i)+V(\theta_{i+1})
\quad 
&\hbox{if $\theta_{i-1},\theta_i,\theta_{i+1}$ are mutually distinct},
\\
V^{(2)}(\theta_i)+V(\theta_{i+1})
\quad 
&\hbox{if $\theta_i=\theta_{i-1}$},
\\
V^{(2)}(\theta_i)+V(\theta_{i-1})
\quad 
&\hbox{if $\theta_i=\theta_{i+1}$},
\\
V^{(2)}(\theta_{i-1})+V(\theta_i)
\quad 
&\hbox{if $\theta_{i-1}=\theta_{i+1}$}.
\end{array}
\right.
$$
\end{lem}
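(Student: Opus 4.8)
The plan is to analyze how $A$ acts on an eigenvector $v \in V(\theta_i)$ by computing $Av$ and showing its image lies in the appropriate sum of generalized eigenspaces for $B$. The central tool is Proposition \ref{prop:EKT}(i), which tells us that $K_i = (B-\theta_{i-1})(B-\theta_{i+1})A$ acts as a scalar on $V(\theta_i)$; equivalently, for $v \in V(\theta_i)$ we have
\begin{gather*}
(B-\theta_{i-1})(B-\theta_{i+1}) A v = \lambda_i v
\end{gather*}
for some scalar $\lambda_i$. The strategy is to read off, from this single quadratic relation satisfied by $Av$ under the action of $B$, which generalized $B$-eigenspaces can contain $Av$.

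First I would treat the generic case where $\theta_{i-1}, \theta_i, \theta_{i+1}$ are mutually distinct. Here the relation says $(B-\theta_{i-1})(B-\theta_{i+1})$ sends $Av$ to a scalar multiple of $v \in V(\theta_i)$. One shows that $Av$ must lie in $V(\theta_{i-1}) + V(\theta_i) + V(\theta_{i+1})$ by a linear-algebra argument: decompose $Av$ according to the generalized eigenspace decomposition of $V$ for $B$, and observe that applying the polynomial $(B-\theta_{i-1})(B-\theta_{i+1})$ annihilates the $V(\theta_{i-1})$- and $V(\theta_{i+1})$-components while acting invertibly (by a nonzero scalar) on any component $V(\theta)$ with $\theta \notin \{\theta_{i-1}, \theta_{i+1}\}$, and produces something in the image landing in $V(\theta_i)$. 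Matching against the requirement that the result be a genuine eigenvector (not merely a generalized eigenvector) for $\theta_i$ forces all components of $Av$ outside $V(\theta_{i-1}), V(\theta_i), V(\theta_{i+1})$ to vanish, and forces the $V(\theta_i)$-component to be an honest eigenvector. I would make this precise by arguing component-by-component through the primary decomposition of $V$ as a $B$-module.

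Next I would handle the three degenerate cases, which all rely on Lemma \ref{lem:theta^2} characterizing when two of the three neighbors coincide. When $\theta_i = \theta_{i-1}$, the quadratic relation becomes $(B-\theta_i)(B-\theta_{i+1})Av = \lambda_i v$ with $v \in V(\theta_i)$, so applying $(B-\theta_i)$ once more gives $(B-\theta_i)^2(B-\theta_{i+1})Av = \lambda_i(B-\theta_i)v = 0$; this shows the relevant components of $Av$ lie in $V^{(2)}(\theta_i)$ and $V(\theta_{i+1})$, which is exactly the claimed inclusion. The cases $\theta_i = \theta_{i+1}$ and $\theta_{i-1} = \theta_{i+1}$ are symmetric: in each, one repeated factor upgrades a simple eigenspace to a generalized (order $2$) eigenspace in the allowed target, and the same primary-decomposition bookkeeping applies.

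The main obstacle will be making the primary-decomposition argument fully rigorous rather than heuristic — specifically, justifying that a vector $w = Av$ satisfying $(B-\theta_{i-1})(B-\theta_{i+1})w \in V(\theta_i)$ can have nonzero components only in the generalized eigenspaces indexed by $\theta_{i-1}, \theta_i, \theta_{i+1}$, with the correct orders. The care needed is to track, for each generalized eigenspace $V^{(m)}(\theta)$ of $B$, how the operator $(B-\theta_{i-1})(B-\theta_{i+1})$ acts: it is nilpotent-plus-scalar on each primary block, invertible precisely when $\theta \notin \{\theta_{i-1}, \theta_{i+1}\}$, and the constraint that its output be a pure $\theta_i$-eigenvector pins down both which blocks survive and the allowed nilpotency order. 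I would organize this by writing $Av = \sum_\theta w_\theta$ with $w_\theta$ in the $\theta$-primary component and analyzing each summand separately, invoking Lemma \ref{lem:theta^2} to know the coincidence patterns are exhaustive.
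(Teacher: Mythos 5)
Your proof is correct and takes essentially the same route as the paper: applying $(B-\theta_i)$ to your relation $K_iv=\lambda_i v$ recovers exactly Proposition \ref{prop:EKT}(ii) (that $T_i=(B-\theta_{i-1})(B-\theta_i)(B-\theta_{i+1})A$ vanishes on $V(\theta_i)$), which is the sole fact the paper cites, the rest being the standard kernel decomposition for an annihilating polynomial with pairwise coprime factors. You merely spell out the primary-decomposition bookkeeping that the paper compresses into the word ``immediate''.
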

\begin{proof}
Immediate from Proposition \ref{prop:EKT}(ii).
\end{proof}

\begin{lem}\label{lem:KV(2)-MV} 
For all $i\in \Z/\dbar \Z$ the $\F$-subspace $V^{(2)}(\theta_i)$ of $V$ is invariant under 
\begin{gather}\label{K-BAB}
K_i -((q^2+q^{-2})B-2\theta_i) A (B-\theta_i).
\end{gather}
\end{lem}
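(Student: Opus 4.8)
The plan is to reduce the operator in (\ref{K-BAB}) to a transparent form by a single application of the defining relation (\ref{e:a->ABc}), after which the invariance becomes immediate. First I would expand $K_i=(B-\theta_{i-1})(B-\theta_{i+1})A$ and invoke Lemma \ref{lem:theta}(i),(ii) to rewrite the quadratic factor as
$$
(B-\theta_{i-1})(B-\theta_{i+1})=B^2-(q^2+q^{-2})\theta_i B+\theta_i^2+(q^2-q^{-2})^2.
$$
Expanding $((q^2+q^{-2})B-2\theta_i)A(B-\theta_i)$ and subtracting it from $K_i$, the two contributions of $BA$ cancel, and one is left with
$$
K_i-((q^2+q^{-2})B-2\theta_i)A(B-\theta_i)=B^2A-(q^2+q^{-2})BAB+2\theta_iAB-\theta_i^2A+(q^2-q^{-2})^2A.
$$

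Next I would substitute (\ref{e:a->ABc}) to eliminate the leading term $B^2A-(q^2+q^{-2})BAB$. Rearranging (\ref{e:a->ABc}) as
$$
B^2A-(q^2+q^{-2})BAB=(q-q^{-1})(q^2-q^{-2})\alpha-AB^2-(q^2-q^{-2})^2A-(q-q^{-1})^2\gamma B
$$
(using that $\gamma$ is central, so $B\gamma=\gamma B$) and plugging in, the two $(q^2-q^{-2})^2A$ terms cancel and the remaining $A$-terms $-AB^2+2\theta_iAB-\theta_i^2A$ assemble into $-A(B-\theta_i)^2$. This yields the key operator identity
$$
K_i-((q^2+q^{-2})B-2\theta_i)A(B-\theta_i)=(q-q^{-1})(q^2-q^{-2})\alpha-A(B-\theta_i)^2-(q-q^{-1})^2\gamma B.
$$

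Finally I would apply this identity to an arbitrary $v\in V^{(2)}(\theta_i)$ and check each summand lands in $V^{(2)}(\theta_i)$. By Lemma \ref{lem:Schur} the central elements $\alpha,\gamma$ act as scalars, so $\alpha v$ is a scalar multiple of $v$ and $\gamma B v$ is a scalar multiple of $Bv$, which lies in $V^{(2)}(\theta_i)$ because that subspace is $B$-invariant; the middle summand vanishes since $(B-\theta_i)^2 v=0$ by definition of $V^{(2)}(\theta_i)$. Hence the operator in (\ref{K-BAB}) preserves $V^{(2)}(\theta_i)$, which is the assertion.

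I expect no genuine obstacle here: the whole argument is a determined but routine computation, and the only real content is recognizing that (\ref{K-BAB}) has been engineered precisely so that, after one use of (\ref{e:a->ABc}), everything collapses into central scalars, the annihilating operator $A(B-\theta_i)^2$, and a $B$-invariant contribution $\gamma B$. The one point demanding care is the sign bookkeeping when collecting the $A$-coefficients, where Lemma \ref{lem:theta}(ii) is exactly what forces the two $(q^2-q^{-2})^2$ contributions to cancel.
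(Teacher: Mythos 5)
Your proof is correct and follows essentially the same route as the paper: both apply the relation (\ref{e:a->ABc}) together with Lemma \ref{lem:theta} to rewrite the operator (\ref{K-BAB}), so that on $V^{(2)}(\theta_i)$ only the central terms $(q-q^{-1})(q^2-q^{-2})\alpha$ and $-(q-q^{-1})^2 B\gamma$ survive, and then invoke Lemma \ref{lem:Schur} (plus $B$-invariance of $V^{(2)}(\theta_i)$) to conclude. The only difference is presentational: you state the operator identity explicitly before evaluating at $v$, whereas the paper performs the same calculation directly on $v$.
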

\begin{proof}
Let $v\in V^{(2)}(\theta_i)$ be given. Applying $v$ to either side of (\ref{e:a->ABc}) and using Lemma \ref{lem:theta}, a direct calculation yields that the image of $v$ under (\ref{K-BAB}) is equal to 
\begin{align}\label{e:K2-pre}
(q-q^{-1})(q^2-q^{-2})\alpha v-(q-q^{-1})^2 B\gamma v.
\end{align}
By Lemma \ref{lem:Schur}, the vector (\ref{e:K2-pre}) is in $V^{(2)}(\theta_i)$. The lemma follows.
\end{proof}

\begin{prop}\label{prop:EKT2}
For all $i\in \Z/\dbar \Z$ the following hold:
\begin{enumerate}
\item 
$
K_i^{(2)} V^{(2)}(\theta_i)\subseteq 
V^{(2)}(\theta_i).
$

\item $T_i^{(2)}$ vanishes on $V^{(2)}(\theta_i)$.

\item $E_i^{(2)} V^{(2)}(\theta_i)\subseteq V^{(2)}(\theta_{i-1})$.
\end{enumerate}
\end{prop}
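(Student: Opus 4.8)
The plan is to follow the same three-step pattern as in Proposition \ref{prop:EKT}, where (ii) and (iii) fall out of (i) by elementary factorizations of operators that are polynomials in $B$. Since $B$ commutes with every polynomial in $B$, one has the identities
$$
K_i^{(2)}=(B-\theta_{i-1})(B-\theta_{i+1})K_i,
\qquad
T_i^{(2)}=(B-\theta_i)^2 K_i^{(2)},
\qquad
T_i^{(2)}=(B-\theta_{i-1})^2 E_i^{(2)}.
$$
Granting (i), statement (ii) follows because $T_i^{(2)}v=(B-\theta_i)^2 K_i^{(2)}v=0$ for $v\in V^{(2)}(\theta_i)$, using $K_i^{(2)}v\in V^{(2)}(\theta_i)$; and (iii) follows because $(B-\theta_{i-1})^2 E_i^{(2)}v=T_i^{(2)}v=0$, whence $E_i^{(2)}v\in V^{(2)}(\theta_{i-1})$. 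Thus the whole proposition reduces to (i).

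For (i), I would fix $v\in V^{(2)}(\theta_i)$ and put $w=(B-\theta_i)v$, so that $w\in V(\theta_i)$. Lemma \ref{lem:KV(2)-MV} tells us that $K_i v$ differs from an element of $V^{(2)}(\theta_i)$ by the vector $((q^2+q^{-2})B-2\theta_i)Aw$. Applying $(B-\theta_{i-1})(B-\theta_{i+1})$ and using the first factorization above, I would express $K_i^{(2)}v$ as a sum of $(B-\theta_{i-1})(B-\theta_{i+1})$ applied to an element of $V^{(2)}(\theta_i)$ — which stays in $V^{(2)}(\theta_i)$ by its $B$-invariance — plus the residual term $(B-\theta_{i-1})(B-\theta_{i+1})((q^2+q^{-2})B-2\theta_i)Aw$.

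The key algebraic observation is to rewrite $(q^2+q^{-2})B-2\theta_i=(q^2+q^{-2})(B-\theta_i)+(\theta_{i-1}+\theta_{i+1}-2\theta_i)$, where Lemma \ref{lem:theta}(i) has been used to identify the constant term. Substituting this splits the residual term into $(q^2+q^{-2})(B-\theta_{i-1})(B-\theta_i)(B-\theta_{i+1})Aw$, which equals $(q^2+q^{-2})T_i w=0$ since $T_i$ vanishes on $V(\theta_i)$ by Proposition \ref{prop:EKT}(ii), together with a scalar multiple of $(B-\theta_{i-1})(B-\theta_{i+1})Aw=K_i w$, which lies in $V(\theta_i)\subseteq V^{(2)}(\theta_i)$ by Proposition \ref{prop:EKT}(i). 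Hence $K_i^{(2)}v\in V^{(2)}(\theta_i)$, establishing (i).

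I expect the main obstacle to be bookkeeping rather than conceptual: one must confirm the three operator factorizations (all valid because the factors are polynomials in $B$), invoke the $B$-invariance of $V^{(2)}(\theta_i)$ at the right moment, and carefully check that $w=(B-\theta_i)v$ genuinely lies in $V(\theta_i)$ so that Proposition \ref{prop:EKT} applies to it. The one genuinely substantive input is Lemma \ref{lem:KV(2)-MV}, which converts the second-order condition $(B-\theta_i)^2v=0$ into a usable expression for $K_i v$; without it the naive computation does not close, since $K_i$ by itself does not preserve $V^{(2)}(\theta_i)$.
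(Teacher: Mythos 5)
Your proof is correct, and for part (i) it takes a genuinely different route from the paper. The paper also starts from Lemma \ref{lem:KV(2)-MV}, obtaining
$K_i V^{(2)}(\theta_i)\subseteq V^{(2)}(\theta_i)+((q^2+q^{-2})B-2\theta_i)AV(\theta_i)$,
but it then invokes Lemma \ref{lem:A}, which locates $AV(\theta_i)$ by a four-way case analysis according to which coincidences occur among $\theta_{i-1},\theta_i,\theta_{i+1}$; applying $(B-\theta_{i-1})(B-\theta_{i+1})$ then lands in $V^{(2)}(\theta_i)$ separately in each case. You instead handle the residual term uniformly: the splitting
$(q^2+q^{-2})B-2\theta_i=(q^2+q^{-2})(B-\theta_i)+(\theta_{i-1}+\theta_{i+1}-2\theta_i)$,
justified by Lemma \ref{lem:theta}(i), reduces everything to $T_i w=0$ and $K_i w\in V(\theta_i)$ for $w=(B-\theta_i)v\in V(\theta_i)$, both already supplied by Proposition \ref{prop:EKT}. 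Your argument thus bypasses Lemma \ref{lem:A} entirely and needs no case distinctions, which is a clean simplification; the paper's version buys a reusable statement (Lemma \ref{lem:A} records exactly where $AV(\theta_i)$ can live, information of independent use), at the cost of the case analysis. Parts (ii) and (iii) are handled by exactly the same operator factorizations in both proofs, and all the commutation steps you flag (polynomials in $B$ commute, $V^{(2)}(\theta_i)$ is $B$-invariant) are valid.
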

\begin{proof}
(i): 
From Lemma \ref{lem:KV(2)-MV} we see that 
\begin{gather*}
K_i  V^{(2)}(\theta_i)
\subseteq  V^{(2)}(\theta_i)+((q^2+q^{-2})B-2\theta_i) A V(\theta_i).
\end{gather*}
Applying Lemma \ref{lem:A} yields that 
$$
K_i  V^{(2)}(\theta_i)
\subseteq
\left\{
\begin{array}{ll}
V(\theta_{i-1})+V^{(2)}(\theta_i)+V(\theta_{i+1})
\quad 
&\hbox{if $\theta_{i-1},\theta_i,\theta_{i+1}$ are mutually distinct},
\\
V^{(2)}(\theta_i)+V(\theta_{i+1})
\quad 
&\hbox{if $\theta_i=\theta_{i-1}$},
\\
V^{(2)}(\theta_i)+V(\theta_{i-1})
\quad 
&\hbox{if $\theta_i=\theta_{i+1}$},
\\
V^{(2)}(\theta_{i-1})+V^{(2)}(\theta_i)
\quad 
&\hbox{if $\theta_{i-1}=\theta_{i+1}$}.
\end{array}
\right.
$$
Combined with $K_i^{(2)}=(B-\theta_{i-1})(B-\theta_{i+1}) K_i$ the statement (i) follows.

(ii): Since $T_i^{(2)}=(B-\theta_i)^2 K_i^{(2)}$ the statement (ii) is immediate from (i).

(iii): Since $T_i^{(2)}=(B-\theta_{i-1})^2 E_i^{(2)}$ the statement (iii) is immediate from (ii).
\end{proof}

\section{Proof of Theorem \ref{thm:dimension}}\label{s:proof}

We are devoted to proving Theorem \ref{thm:dimension} in the final section.

\begin{lem}\label{lem2:bidiagonal}
For all $i\in \Z/\dbar \Z$ the following are equivalent:
\begin{enumerate}
\item $
E_i|_{V(\theta_i)}
$
is not injective.

\item There exists
an eigenvector of 
$
(B-\theta_{i+1})A
$
in $V(\theta_i)$. 
\end{enumerate}
\end{lem}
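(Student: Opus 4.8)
The plan is to phrase everything in terms of the single operator $M=(B-\theta_{i+1})A$, since both relevant operators factor through it: directly from the definitions one has $E_i=(B-\theta_i)M$ and $K_i=(B-\theta_{i-1})M$. I will use repeatedly that $B$ acts as the scalar $\theta_i$ on $V(\theta_i)$, so that for $v\in V(\theta_i)$ the vector $E_i v=(B-\theta_i)Mv$ vanishes precisely when $Mv\in V(\theta_i)$. In other words, the kernel of $E_i|_{V(\theta_i)}$ is exactly the set of $v\in V(\theta_i)$ with $Mv\in V(\theta_i)$, and this reformulation is what links the two conditions.

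For the implication (ii)$\Rightarrow$(i) I would take a nonzero $v\in V(\theta_i)$ with $Mv=\lambda v$ for some $\lambda\in\F$ and compute $E_i v=(B-\theta_i)Mv=\lambda(B-\theta_i)v=0$; since $v\neq 0$ this shows $E_i|_{V(\theta_i)}$ is not injective. This direction is immediate.

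The substance is in (i)$\Rightarrow$(ii). Assuming $E_i|_{V(\theta_i)}$ is not injective, I pick a nonzero $v\in V(\theta_i)$ with $E_i v=0$, so that $Mv\in V(\theta_i)$ by the reformulation above; in particular $V(\theta_i)\neq\{0\}$. By Proposition \ref{prop:EKT}(i) the element $K_i$ acts on $V(\theta_i)$ as a scalar, say $k_i$, and since $K_i=(B-\theta_{i-1})M$ with $Mv\in V(\theta_i)$ I obtain $(\theta_i-\theta_{i-1})Mv=k_i v$. When $\theta_i\neq\theta_{i-1}$ this already exhibits $v$ as an eigenvector of $M=(B-\theta_{i+1})A$ lying in $V(\theta_i)$, with eigenvalue $k_i/(\theta_i-\theta_{i-1})$, and (ii) holds.

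The \emph{main obstacle} is the degenerate case $\theta_i=\theta_{i-1}$, where the relation $(\theta_i-\theta_{i-1})Mv=k_i v$ collapses and no longer determines $Mv$. Here I would argue structurally instead: the equality $\theta_i=\theta_{i-1}$ forces $E_i=(B-\theta_i)(B-\theta_{i+1})A=(B-\theta_{i-1})(B-\theta_{i+1})A=K_i$ as operators, so $E_i|_{V(\theta_i)}=K_i|_{V(\theta_i)}=k_i\,\mathrm{id}$. Non-injectivity of this scalar operator on the nonzero space $V(\theta_i)$ forces $k_i=0$, whence $E_i$ vanishes on all of $V(\theta_i)$ and therefore $M$ maps $V(\theta_i)$ into itself. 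As $V(\theta_i)$ is nonzero and finite-dimensional and $\F$ is algebraically closed, the endomorphism $M|_{V(\theta_i)}$ has an eigenvector, which is precisely the required eigenvector of $(B-\theta_{i+1})A$ in $V(\theta_i)$. This completes the argument in both cases.
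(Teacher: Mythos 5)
Your proof is correct and follows essentially the same route as the paper: the identities $E_i=(B-\theta_i)(B-\theta_{i+1})A$ and $K_i=(B-\theta_{i-1})(B-\theta_{i+1})A$, the scalar action of $K_i$ on $V(\theta_i)$ from Proposition \ref{prop:EKT}(i) to handle $\theta_{i-1}\neq\theta_i$, and in the degenerate case $\theta_{i-1}=\theta_i$ the observation that $E_i=K_i$ must then vanish on $V(\theta_i)$, making $V(\theta_i)$ invariant under $(B-\theta_{i+1})A$ so that algebraic closure of $\F$ yields an eigenvector. The only difference is cosmetic (introducing $M=(B-\theta_{i+1})A$ and ordering the cases differently).
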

\begin{proof}  
The implication from (ii) to (i) is trivial. We show the implication from (i) to (ii).

(i) $\Rightarrow$ (ii): First suppose that $\theta_{i-1}=\theta_i$.  Note that $E_i=K_i$ in this case. By (i) the operator $K_i$ vanishes at some nonzero vector of $V(\theta_i)$. Combined with Proposition  \ref{prop:EKT}(i) the element $K_i$ vanishes on $V(\theta_i)$. It follows that $V(\theta_i)$ is $(B-\theta_{i+1})A$-invariant. 
Since $\F$ is algebraically closed, there exists an eigenvector of $(B-\theta_{i+1})A$ in $V(\theta_i)$.

Next consider the case $\theta_{i-1}\not=\theta_i$.
By (i) there exists a nonzero $v\in V(\theta_i)$ such that 
$E_i v=0$.
It follows that
\begin{gather}\label{bidiagonal_1}
(B-\theta_{i+1}) A v\in V(\theta_i).
\end{gather}
Using (\ref{bidiagonal_1}) yields that 
$$
K_i v=(\theta_i-\theta_{i-1})(B-\theta_{i+1})A v.
$$
On the other hand, $K_i v$ is a scalar multiple of $v$ by Proposition \ref{prop:EKT}(i). Since $\theta_{i-1}\not=\theta_i$ it follows that $v$ is an eigenvector of 
$
(B-\theta_{i+1})A$. 

We have shown that the implication from (i) to (ii) is true in either case. The lemma follows. 
\end{proof}

\begin{lem}\label{lem:bidiagonal}
For any $i\in \Z/\dbar \Z$, 
if there exists an eigenvector $v$ of $(B-\theta_{i+1})A$ in $V(\theta_i)$ then
$$
(B-\theta_{i+j})A^j v
$$
is an $\F$-linear combination of $v, Av,\ldots, A^{j-1} v$ for all $j=0,1,\ldots, \dbar-1$.
\end{lem}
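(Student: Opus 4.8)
The plan is to prove a slightly stronger statement by strong induction on $j$, the engine being the identity (\ref{e:a->ABc}) of Lemma \ref{lem:ABc} together with the three-term relations of Lemma \ref{lem:theta}. Throughout write $W_j=\operatorname{span}\{v,Av,\dots,A^{j-1}v\}$ for the $\F$-span, so that $W_0=\{0\}$ and the assertion to be proved reads $(B-\theta_{i+j})A^j v\in W_j$. The case $j=0$ is immediate from $v\in V(\theta_i)$, and the case $j=1$ is precisely the hypothesis that $v$ is an eigenvector of $(B-\theta_{i+1})A$. For the inductive step I would assume $(B-\theta_{i+k})A^k v\in W_k$ for every $k\le j$; a first consequence is that $W_{j+1}$ is then $B$-invariant, with $B$ acting on it as an upper-triangular operator carrying the diagonal entries $\theta_i,\theta_{i+1},\dots,\theta_{i+j}$ along the basis $v,Av,\dots,A^j v$.

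The main computation is to apply the central identity (\ref{e:a->ABc}) to the vector $A^j v$. Since $\alpha$ and $\gamma$ act on $V$ as scalars (Lemma \ref{lem:Schur}), and since the inductive hypothesis permits replacing each $BA^k v$ by $\theta_{i+k}A^k v$ modulo $W_{j+1}$ (using also $AW_j\subseteq W_{j+1}$ and the $B$-invariance of $W_{j+1}$), the whole right-hand side collapses, modulo $W_{j+1}$, to a quadratic polynomial in $B$ applied to $A^{j+1}v$. By Lemma \ref{lem:theta} that quadratic factors, and one arrives at
\[
(B-\theta_{i+j-1})(B-\theta_{i+j+1})\,A^{j+1}v\in W_{j+1}.
\]

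The hard part is to discard the extraneous factor $(B-\theta_{i+j-1})$ and conclude $(B-\theta_{i+j+1})A^{j+1}v\in W_{j+1}$: from the quadratic relation alone this is false, because $B$ could in principle act on the top of the chain through the ``downward'' eigenvalue $\theta_{i+j-1}$. This is exactly where the eigenvector hypothesis must enter. It gives $E_i v=(B-\theta_i)(B-\theta_{i+1})Av=0$, i.e.\ $Av$ has no component in the lower eigenspace $V(\theta_{i-1})$, and I would propagate this ``no downward drift'' along the chain. Concretely I would strengthen the induction to $A^j v\in W_j+V(\theta_{i+j})$; writing $A^j v=w+e$ with $w\in W_j$ and $e\in V(\theta_{i+j})$ (so that $e\in W_{j+1}$ automatically), Lemma \ref{lem:A} confines $Ae$ to $V(\theta_{i+j-1})+V(\theta_{i+j})+V(\theta_{i+j+1})$, the middle summand is, by the scalar action of $K_{i+j}$ on $V(\theta_{i+j})$ (Proposition \ref{prop:EKT}(i)), a scalar multiple of $e$ and hence lies in $W_{j+1}$, and the task reduces to showing that the remaining downward term, governed by $E_{i+j}e\in V(\theta_{i+j-1})$, also lies in $W_{j+1}$. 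Tracing this downward term back to the base identity $E_i v=0$ is the crux of the argument, and the step I expect to demand the most care, since controlling $E_{i+j}e$ is on the same footing as the conclusion itself and therefore forces a carefully arranged simultaneous induction rather than a naive one.

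Finally, the discussion is cleanest in type $D$, where the $\theta_k$ are mutually distinct (Proposition \ref{prop:multi_DEO}(i)) and the eigenspaces $V(\theta_k)$ are genuine. For the types $O(\pm2)$, $E(2)$ and $E(q+q^{-1})$ some of the $\theta_k$ coincide, and I would run the same argument with the generalized eigenspaces $V^{(2)}(\theta_k)$ furnished by Proposition \ref{prop:decomposition}, which changes only the bookkeeping. The restriction $0\le j\le\dbar-1$ guarantees that the indices $i,i+1,\dots,i+j$ are pairwise distinct modulo $\dbar$, so that no wrap-around coincidence of eigenvalues can spoil the triangular picture during the induction.
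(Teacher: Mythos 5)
There is a genuine gap, and you have located it yourself: your engine, identity (\ref{e:a->ABc}) applied to $A^j v$, can only ever produce the quadratic relation
\[
(B-\theta_{i+j-1})(B-\theta_{i+j+1})\,A^{j+1}v\in W_{j+1},
\]
and the step that removes the extraneous factor $(B-\theta_{i+j-1})$ is exactly the content of the lemma. Your proposed repair --- strengthening the induction to $A^j v\in W_j+V(\theta_{i+j})$ and controlling the downward component via $E_{i+j}$ --- is left open at its crux, as you acknowledge (``on the same footing as the conclusion itself''). Moreover it degrades further in the non-$D$ types: when $\theta_{i+j}\in\{\pm 2\}$ Lemma \ref{lem:theta^2} gives $\theta_{i+j-1}=\theta_{i+j+1}$, so the quadratic has a double root and the claim that passing to $V^{(2)}$ ``changes only the bookkeeping'' is not justified. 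So the proposal, as written, does not prove the statement.

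The paper's proof sidesteps the obstruction entirely by choosing the other displayed identity: it applies (\ref{e:b->ABc}) --- the expression for $\beta$ --- to the vector $A^{j-2}v$. The point is structural: in (\ref{e:b->ABc}) the element $B$ occurs only \emph{linearly}, sandwiched between powers of $A$ (the terms $A^2B$, $ABA$, $BA^2$), so the resulting relation contains $BA^j v$ with coefficient $1$ as its top term, while the strong induction hypothesis at steps $j-1$ and $j-2$ (plus $B$- and $A$-stability of the lower spans) absorbs everything else into $W_j$. The three-term identity $\theta_{i+j-2}+\theta_{i+j}=(q^2+q^{-2})\theta_{i+j-1}$ of Lemma \ref{lem:theta}(i) then assembles the surviving coefficients into $(B-\theta_{i+j})A^j v\in W_j$ directly --- no quadratic in $B$, no extraneous factor, no eigenspace decomposition, and no case analysis over the five types. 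The eigenvector hypothesis is consumed only in seeding the base case $j=1$; it never needs to be ``propagated'' along the chain. If you rerun your induction with (\ref{e:b->ABc}) in place of (\ref{e:a->ABc}), your first paragraph already contains all the remaining ingredients.
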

\begin{proof}
Since $v\in V(\theta_i)$ the statement is true for $j=0$. 
Since $v$ is an eigenvector of $(B-\theta_{i+1})A$ the statement is true for $j=1$.
Suppose that $j\geq 2$. 
Applying $A^{j-2}v$ to either side of (\ref{e:b->ABc}) we have 
\begin{align}\label{e:vj-2-B}
A^2 B A^{j-2}v
-(q^2+q^{-2}) ABA^{j-1}v
+
BA^j v
+(q^2-q^{-2})^2 BA^{j-2} v
\end{align}
is equal to 
\begin{align}\label{e:vj-2}
(q-q^{-1})(q^2-q^{-2}) A^{j-2} \beta v-(q-q^{-1})A^{j-1}\gamma  v.
\end{align}
By Lemma \ref{lem:Schur} the term (\ref{e:vj-2}) is an $\F$-linear combination of $A^{j-2} v$ and $A^{j-1} v$.
Applying induction hypothesis the term (\ref{e:vj-2-B}) is equal to 
\begin{gather}\label{e:vj-2-B'}
(\theta_{i+j-2}-(q^2+q^{-2})\theta_{i+j-1}+B) A^j v
\end{gather}
plus an $\F$-linear combination of $v,Av,\ldots,A^{j-1} v$. 
Using Lemma \ref{lem:theta}(i) yields that (\ref{e:vj-2-B'}) is equal to 
$(B-\theta_{i+j})A^j v$. Combining the above comments, the lemma follows.
\end{proof}

\begin{thm}\label{thm:dim_bidiag}
If there exists an $i\in \Z/\dbar \Z$ such that 
$
E_i|_{V(\theta_i)}
$
is not injective, then $V$ is of dimension less than or equal to $\dbar$.
\end{thm}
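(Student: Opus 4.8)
The plan is to exhibit a nonzero cyclic vector whose $A$-orbit spans a $\triangle_q$-submodule of dimension at most $\dbar$; irreducibility then forces this submodule to be all of $V$.

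First I would invoke Lemma \ref{lem2:bidiagonal}: the hypothesis that $E_i|_{V(\theta_i)}$ fails to be injective yields a (nonzero) eigenvector $v$ of $(B-\theta_{i+1})A$ lying in $V(\theta_i)$. Set
$$
W=\mathrm{span}_{\F}\{v,\,Av,\,A^2v,\,\ldots,\,A^{\dbar-1}v\},
$$
so that $\dim W\le \dbar$ and $W\ne\{0\}$. The goal is to show $W$ is invariant under $\triangle_q$. Since $\triangle_q$ is generated by $A,B,\gamma$ (Lemma \ref{lem:ABc}) and $\gamma$ is central, hence acts as a scalar on $V$ by Lemma \ref{lem:Schur}, it suffices to check that $W$ is invariant under $A$ and $B$.

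For $B$-invariance I would read off from Lemma \ref{lem:bidiagonal} that, for each $j\in\{0,1,\ldots,\dbar-1\}$,
$$
BA^jv=\theta_{i+j}A^jv+(\hbox{an }\F\hbox{-linear combination of }v,Av,\ldots,A^{j-1}v),
$$
so $BA^jv\in W$; thus $BW\subseteq W$. For $A$-invariance the only thing to verify is that $A^{\dbar}v\in W$, since $A\cdot A^jv=A^{j+1}v\in W$ automatically for $j\le\dbar-2$. Here is where the central structure enters: by Lemma \ref{lem:center} the element $T_{\dbar}(A)$ is central, so by Lemma \ref{lem:Schur} it acts on $V$ as a scalar $c$. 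Because $T_{\dbar}$ is a \emph{monic} polynomial of degree $\dbar$, the relation $T_{\dbar}(A)=c$ on $V$ expresses $A^{\dbar}$ as $c$ minus a polynomial in $A$ of degree strictly less than $\dbar$; applying this to $v$ gives $A^{\dbar}v\in W$, whence $AW\subseteq W$.

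Having shown that $W$ is an $A$-, $B$-, and $\gamma$-invariant nonzero subspace, it is a nonzero $\triangle_q$-submodule of $V$. Since $V$ is irreducible, $W=V$, and therefore $\dim V=\dim W\le\dbar$. I expect the one genuinely new step — the rest being packaged by the preceding two lemmas — to be the verification of $A$-invariance: one must recognize that a degree-$\dbar$ polynomial identity for $A$ is exactly what is needed, and that the centrality of $T_{\dbar}(A)$ together with its monicity supplies such an identity.
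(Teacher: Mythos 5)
Your proposal is correct and follows essentially the same route as the paper's own proof: the same cyclic subspace $W=\mathrm{span}_{\F}\{v,Av,\ldots,A^{\dbar-1}v\}$ built from the eigenvector supplied by Lemma \ref{lem2:bidiagonal}, with $B$-invariance from Lemma \ref{lem:bidiagonal}, $A$-invariance from the centrality of $T_{\dbar}(A)$ via Lemmas \ref{lem:center} and \ref{lem:Schur}, and the conclusion via Lemma \ref{lem:ABc} and irreducibility. Your explicit remark that $T_{\dbar}$ is monic (hence any polynomial of degree $\dbar$ with nonzero leading coefficient would do) is a harmless sharpening of the paper's statement that $T_{\dbar}(x)$ has degree $\dbar$.
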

\begin{proof}
By Lemma \ref{lem2:bidiagonal} there exists an eigenvector $v$ of $(B-\theta_{i+1}) A$ in $V(\theta_i)$. 
Let $W$ denote the $\F$-subspace of $V$ spanned by 
$$
v, A v,\ldots, A^{\dbar-1} v.
$$  
Apparently $W$ is $\gamma$-invariant by Lemma \ref{lem:Schur}. 
It follows from Lemma \ref{lem:bidiagonal} that $BA^j v$ is an $\F$-linear combination of $v, Av,\ldots, A^j v$ for all $j=0,1,\ldots, \dbar-1$. Hence $W$ is $B$-invariant.
Since $T_{\dbar}(A)$ is central in $\triangle_q$ by Lemma \ref{lem:center}, it follows from Lemma \ref{lem:Schur} that $T_{\dbar}(A)v$ is a scalar multiple of $v$. Since $T_{\dbar}(x)$ is of degree $\dbar$, it follows that $A^{\dbar} v\in W$. Hence $W$ is $A$-invariant. Since $W$ is invariant under $A,B,\gamma$ it follows from Lemma \ref{lem:ABc} that $W$ is a $\triangle_q$-submodule of $V$. Since $v\not=0$ and 
the $\triangle_q$-module $V$ is irreducible, it follows that $V=W$. 
By construction $W$ is of dimension less than or equal to $\dbar$.   
The theorem follows.
\end{proof}

We now deal with the case that $E_i|_{V(\theta_i)}
$
is injective for each $i\in \Z/\dbar \Z$.

\begin{lem}\label{lem:cyclic_V(1)}
If 
$
E_i|_{V(\theta_i)}
$
is injective for each $i\in \Z/\dbar \Z$ then 
$$
V(\theta_i)
\qquad  
\hbox{for all $i\in \Z/\dbar \Z$}
$$ 
are of the same dimension. 
\end{lem}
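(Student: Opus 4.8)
The plan is to show that the maps $E_i$ furnish isomorphisms between consecutive eigenspaces $V(\theta_i)$, thereby forcing all of them to share a common dimension. Recall from Proposition \ref{prop:EKT}(iii) that $E_i V(\theta_i)\subseteq V(\theta_{i-1})$, so each $E_i$ restricts to a linear map
$$
E_i|_{V(\theta_i)}\colon V(\theta_i)\longrightarrow V(\theta_{i-1}).
$$
By hypothesis every one of these restrictions is injective, which gives immediately the inequality $\dim V(\theta_i)\leq \dim V(\theta_{i-1})$ for each $i\in\Z/\dbar\Z$. First I would assemble these inequalities around the cycle $\Z/\dbar\Z$ and observe that a chain of inequalities
$$
\dim V(\theta_0)\leq \dim V(\theta_{-1})\leq \dim V(\theta_{-2})\leq \cdots \leq \dim V(\theta_0)
$$
closing back on itself can only hold with equality throughout. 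Since the indices are taken modulo $\dbar$, traversing the full cycle returns to the starting eigenspace, and the only way a finite sequence of nonnegative integers can weakly increase all the way around a cycle is for every term to coincide. Hence $\dim V(\theta_i)=\dim V(\theta_j)$ for all $i,j\in\Z/\dbar\Z$.

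I expect the only subtlety to be the bookkeeping needed to guarantee that the index shift by $E_i$ genuinely cycles through all of $\Z/\dbar\Z$. Because $i\mapsto i-1$ is a bijection of $\Z/\dbar\Z$ generating the whole group, iterating $E_i$ links $V(\theta_i)$ to every other $V(\theta_j)$ in turn, so no eigenspace is left out of the chain. One should note that this argument uses only the inclusions of Proposition \ref{prop:EKT}(iii) together with the injectivity assumption; in particular it does not require $E_i$ to be surjective, since the cyclic equality of dimensions is deduced purely from the monotone chain rather than from the maps individually being isomorphisms.

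The main (and essentially only) obstacle is conceptual rather than computational: one must be careful that some $V(\theta_i)$ could a priori be zero. If the sequence $\{\theta_i\}$ has repeated values — as happens for the types $O(\pm 2)$, $E(2)$, $E(q+q^{-1})$ where distinct indices $i$ can yield the same scalar $\theta_i$ — then the notation $V(\theta_i)$ for the various $i$ need not denote distinct subspaces, and the chain of maps must be interpreted accordingly. I would therefore read this lemma in the regime where the relevant eigenspaces are indexed consistently with the $q$-Racah sequence, so that the cyclic composition $E_0 E_1\cdots E_{\dbar-1}$ maps $V(\theta_0)$ into itself through the full orbit. Once the chain is correctly set up, injectivity of each link immediately yields the weakly increasing cyclic chain, and equality of all dimensions follows at once.
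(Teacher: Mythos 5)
Your proposal is correct and follows essentially the same route as the paper's own proof: Proposition \ref{prop:EKT}(iii) plus the injectivity hypothesis gives $\dim V(\theta_i)\leq\dim V(\theta_{i-1})$ for every $i$, and closing this chain of inequalities around the finite cyclic group $\Z/\dbar\Z$ forces equality throughout. The extra caveats you raise (repeated values $\theta_i=\theta_j$, possibly zero eigenspaces) are harmless, since coinciding scalars simply make the corresponding subspaces identical and the cyclic dimension argument goes through unchanged.
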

\begin{proof}
By Proposition \ref{prop:EKT}(iii) and since $E_i|_{V(\theta_i)}$ is injective for all $i\in \Z/\dbar\Z$, it follows that
$$
\dim V(\theta_i)\leq  \dim V(\theta_{i-1})
\qquad 
\hbox{for all $i\in \Z/\dbar \Z$}.
$$
Since $\Z/\dbar \Z$ is a finite cyclic group this implies that $\dim V(\theta_i)$ are equal for all $i\in \Z/\dbar \Z$. The lemma follows.
\end{proof}

\begin{lem}\label{lem:cyclic_dimV(2)=2dimV(1)}
Suppose that 
$
E_i|_{V(\theta_i)}
$
is injective for each $i\in \Z/\dbar \Z$. If $\{\theta_i\}_{i\in \Z/\dbar \Z}$ is of type $O(2), O(-2), E(2)$ or $E(q+q^{-1})$, then 
\begin{gather}\label{dimV(2)=2dimV(1)}
\dim V^{(2)}(\theta_1)=2 \cdot \dim V(\theta_1).
\end{gather}
\end{lem}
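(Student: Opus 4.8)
The plan is to recast (\ref{dimV(2)=2dimV(1)}) as a surjectivity statement and then transfer surjectivity from a suitable operator $E_j$ to the map $B-\theta_1$. Write $m=\dim V(\theta_1)$; by Lemma \ref{lem:cyclic_V(1)} every $V(\theta_i)$ has dimension $m$. Since $V(\theta_1)=\ker(B-\theta_1)$ sits inside $V^{(2)}(\theta_1)=\ker(B-\theta_1)^2$ and $B-\theta_1$ carries $V^{(2)}(\theta_1)$ into $V(\theta_1)$, the rank--nullity theorem gives $\dim V^{(2)}(\theta_1)=m+\dim\big((B-\theta_1)V^{(2)}(\theta_1)\big)$, a quantity that is at most $2m$ and equals $2m$ exactly when $(B-\theta_1)\colon V^{(2)}(\theta_1)\to V(\theta_1)$ is onto. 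Thus I only need to establish this surjectivity.

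Because $\{\theta_i\}$ is not of type $D$, it is of type $O(2)$, $O(-2)$, $E(2)$, or $E(q+q^{-1})$, and in each case $\theta_1$ has multiplicity $2$ in the sequence by Proposition \ref{prop:multi_DEO}. I locate the second occurrence and choose an index $j$ accordingly: for $O(2)$, $O(-2)$, $E(2)$ one has $\theta_{-1}=\theta_1\neq\theta_0$ and I take $j=0$, while for $E(q+q^{-1})$ one has $\theta_0=\theta_1\neq\theta_2$ and I take $j=1$. In both cases $\theta_{j-1}=\theta_1$, so Proposition \ref{prop:EKT}(iii) gives $E_j\,V(\theta_j)\subseteq V(\theta_{j-1})=V(\theta_1)$; since $E_j|_{V(\theta_j)}$ is injective by hypothesis and the domain and codomain both have dimension $m$, this restriction is an isomorphism of $V(\theta_j)$ onto $V(\theta_1)$.

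The crux is to read $B-\theta_1$ off from $E_j$. In either case one of the two factors of $E_j=(B-\theta_j)(B-\theta_{j+1})A$ is precisely $B-\theta_1$ (since $\theta_{j+1}=\theta_1$ when $j=0$ and $\theta_j=\theta_1$ when $j=1$); call the other factor $B-\theta_\ast$, where $\theta_\ast=\theta_0$ if $j=0$ and $\theta_\ast=\theta_2$ if $j=1$. Applying Lemma \ref{lem:A} at index $j$ — the case $\theta_{j-1}=\theta_{j+1}$ when $j=0$, and the case $\theta_j=\theta_{j-1}$ when $j=1$ — I can write, for each $v\in V(\theta_j)$,
\[
Av=w+u,\qquad w\in V^{(2)}(\theta_1),\quad u\in V(\theta_\ast).
\]
Then $B-\theta_\ast$ annihilates $u$, and since $(B-\theta_1)w\in V(\theta_1)$ the factor $B-\theta_\ast$ acts on it as the nonzero scalar $\theta_1-\theta_\ast$, so that $E_jv=(\theta_1-\theta_\ast)(B-\theta_1)w$. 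As $v$ runs over $V(\theta_j)$ the left-hand side runs over all of $V(\theta_1)$, because $E_j$ is onto $V(\theta_1)$; hence $(B-\theta_1)w$ runs over all of $V(\theta_1)$ as well. Therefore $(B-\theta_1)V^{(2)}(\theta_1)=V(\theta_1)$, the surjectivity sought, and (\ref{dimV(2)=2dimV(1)}) follows.

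I expect the only genuine difficulty to be the bookkeeping: invoking the correct case of Lemma \ref{lem:A} at index $j$ for each type, respecting the commutation of the two factors of $E_j$ so that the spurious summand $u$ is killed by the intended factor, and checking at small $\dbar$ that the entries $\theta_{j-1},\theta_j,\theta_{j+1}$ display exactly the coincidence used (for instance $\theta_0\neq\theta_1$, and $\theta_1\neq\theta_2$ when $j=1$). The conceptual step—converting injectivity of $E_j$, together with the dimension equalities of Lemma \ref{lem:cyclic_V(1)}, into surjectivity of $B-\theta_1$ on the generalized eigenspace $V^{(2)}(\theta_1)$—is then immediate.
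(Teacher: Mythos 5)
Your proposal is correct and takes essentially the same route as the paper: both arguments pick out the operator $E_0$ (for types $O(2)$, $O(-2)$, $E(2)$) or $E_1$ (for type $E(q+q^{-1})$), use injectivity together with Lemma \ref{lem:cyclic_V(1)} to see that its restriction is an isomorphism onto $V(\theta_1)$, factor it through $V^{(2)}(\theta_1)$ with final factor $B-\theta_1$ to conclude that $(B-\theta_1)|_{V^{(2)}(\theta_1)}\colon V^{(2)}(\theta_1)\to V(\theta_1)$ is surjective, and finish by rank--nullity. The only difference is cosmetic: you make the intermediate step explicit via Lemma \ref{lem:A} and the nonzero scalar $\theta_1-\theta_\ast$, where the paper simply writes the operator as the composition $(B-\theta_1)\circ\bigl((B-\theta_\ast)A\bigr)$ landing in $V^{(2)}(\theta_1)$.
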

\begin{proof}
Suppose that $\{\theta_i\}_{i\in \Z/\dbar \Z}$ is of type $O(2), O(-2)$ or $E(2)$. 
By Proposition \ref{prop:EKT}(iii) and since $\theta_{-1}=\theta_1$ by Proposition \ref{prop:multi_DEO}(ii), we have the map
\begin{gather}\label{E0}
E_0|_{V(\theta_0)}:V(\theta_0)\to V(\theta_1).
\end{gather}
We decompose (\ref{E0}) into a composition of $(B-\theta_0)A|_{V(\theta_0)}:V(\theta_0)\to V^{(2)}(\theta_1)$ followed by 
\begin{gather}\label{E0_2nd}
(B-\theta_1)|_{V^{(2)}(\theta_1)}:V^{(2)}(\theta_1)\to V(\theta_1).
\end{gather}
Since (\ref{E0}) is injective and by Lemma \ref{lem:cyclic_V(1)} it follows that (\ref{E0}) is an isomorphism. Hence (\ref{E0_2nd}) is surjective. Now, applying the rank-nullity theorem to (\ref{E0_2nd}) the equality (\ref{dimV(2)=2dimV(1)}) follows.

Suppose that $\{\theta_i\}_{i\in \Z/\dbar \Z}$ is of type $E(q+q^{-1})$.
By Proposition \ref{prop:EKT}(iii) and since $\theta_0=\theta_1$ by Proposition \ref{prop:multi_DEO}(iii), we have the map
\begin{gather}\label{E1}
E_1|_{V(\theta_1)}:V(\theta_1)\to V(\theta_1).
\end{gather}
We decompose the map (\ref{E1}) into a composition of $(B-\theta_2)A|_{V(\theta_1)}:V(\theta_1)\to V^{(2)}(\theta_1)$ followed by 
\begin{gather}\label{E1_2nd}
(B-\theta_1)|_{V^{(2)}(\theta_1)}:V^{(2)}(\theta_1)\to V(\theta_1).
\end{gather}
Since (\ref{E1}) is injective it follows that (\ref{E1}) is an isomorphism. Hence (\ref{E1_2nd}) is surjective. Now, applying the rank-nullity theorem to (\ref{E1_2nd}) the equality (\ref{dimV(2)=2dimV(1)}) follows.  
\end{proof}

\begin{lem}\label{lem:cyclic_E(2)V(1)}
For any $i\in \Z/\dbar \Z$ with $\theta_i\not\in\{\theta_{i-1},\pm2\}$, 
if 
$
E_i|_{V(\theta_i)}
$
is injective then 
$
E^{(2)}_i|_{V(\theta_i)}
$ is injective.
\end{lem}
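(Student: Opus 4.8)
The plan is to realize $E_i^{(2)}|_{V(\theta_i)}$ as the composition of $E_i|_{V(\theta_i)}$ with an injective operator, so that injectivity of the latter transfers to the former. The starting observation is the factorization
\[
E_i^{(2)} = (B-\theta_i)(B-\theta_{i+1})\,E_i,
\]
which holds because $B-\theta_i$ and $B-\theta_{i+1}$ are commuting polynomials in $B$ and $E_i=(B-\theta_i)(B-\theta_{i+1})A$; squaring the two linear factors exactly reproduces $E_i^{(2)}=(B-\theta_i)^2(B-\theta_{i+1})^2A$.

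Next I would invoke Proposition \ref{prop:EKT}(iii) to see that $E_i$ carries $V(\theta_i)$ into $V(\theta_{i-1})$. On $V(\theta_{i-1})$ the operator $B$ acts as the scalar $\theta_{i-1}$, so the remaining factor $(B-\theta_i)(B-\theta_{i+1})$ restricts to multiplication by $(\theta_{i-1}-\theta_i)(\theta_{i-1}-\theta_{i+1})$. The crux is to check that this scalar is nonzero. The hypothesis $\theta_i\neq\theta_{i-1}$ gives $\theta_{i-1}-\theta_i\neq 0$ directly, while the hypothesis $\theta_i\notin\{\pm 2\}$, combined with the equivalence of (i) and (iv) in Lemma \ref{lem:theta^2}, yields $\theta_{i-1}\neq\theta_{i+1}$ and hence $\theta_{i-1}-\theta_{i+1}\neq 0$.

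With these two facts in hand the argument closes at once: given $v\in V(\theta_i)$ with $E_i^{(2)}v=0$, the factorization together with the scalar action on $V(\theta_{i-1})$ forces $(\theta_{i-1}-\theta_i)(\theta_{i-1}-\theta_{i+1})\,E_i v=0$, so $E_i v=0$, and injectivity of $E_i|_{V(\theta_i)}$ then forces $v=0$. I do not anticipate a genuine obstacle here; the only place demanding care is the bookkeeping that translates the stated conditions $\theta_i\neq\theta_{i-1}$ and $\theta_i\neq\pm 2$ into the nonvanishing of the two linear factors on $V(\theta_{i-1})$, for which Lemma \ref{lem:theta^2} is precisely the needed tool.
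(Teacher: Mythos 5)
Your proposal is correct and follows essentially the same route as the paper's own proof: both factor $E_i^{(2)}=(B-\theta_i)(B-\theta_{i+1})E_i$, use Proposition \ref{prop:EKT}(iii) to land $E_iv$ in $V(\theta_{i-1})$ where $B$ acts as $\theta_{i-1}$, and then show the scalar $(\theta_{i-1}-\theta_i)(\theta_{i-1}-\theta_{i+1})$ is nonzero via the hypothesis $\theta_i\neq\theta_{i-1}$ together with Lemma \ref{lem:theta^2} applied to $\theta_i\notin\{\pm2\}$.
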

\begin{proof}
Suppose that $v$ lies in the kernel of $E^{(2)}_i$ in $V(\theta_i)$. We show that $v=0$.
It follows from Proposition \ref{prop:EKT}(iii) that 
\begin{gather*}
E_i v\in V(\theta_{i-1}).
\end{gather*}
Applying $(B-\theta_i)(B-\theta_{i+1})$ to $E_i v$ yields that $E^{(2)}_i v$ is equal to the scalar multiple of $E_i v$ by  
\begin{gather}\label{e:scalar1}
(\theta_{i-1}-\theta_i) (\theta_{i-1}-\theta_{i+1}).
\end{gather}
Since $\theta_i\not\in\{\pm 2\}$ it follows from Lemma \ref{lem:theta^2} that $\theta_{i-1}\not=\theta_{i+1}$. Combined with $\theta_i\not=\theta_{i-1}$ the scalar  (\ref{e:scalar1}) is nonzero. Since $E^{(2)}_i v=0$ this forces that $E_i v=0$. By the assumption that $E_i|_{V_i(\theta_i)}$ is injective, the vector $v=0$.  
The lemma follows.
\end{proof}

\begin{lem}\label{lem:cyclic_E(2)}
For any $i\in \Z/\dbar \Z$ with $\theta_i\not\in\{ \theta_{i-2},\theta_{i-1},\pm2\}$, 
if 
$
E_i|_{V(\theta_i)}
$
is injective then 
$
E^{(2)}_i|_{V^{(2)}(\theta_i)}
$ is injective.
\end{lem}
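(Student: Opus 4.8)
The plan is to prove that $E_i^{(2)}$ has trivial kernel on $V^{(2)}(\theta_i)$ by peeling off the generalized-eigenspace filtration $V(\theta_i)\subseteq V^{(2)}(\theta_i)$ and reducing, in two steps, to the case of the genuine eigenspace $V(\theta_i)$ already handled in Lemma \ref{lem:cyclic_E(2)V(1)}. Throughout I would write $P_{i-1}$ for the projection of $V$ onto the $B$-generalized eigenspace at $\theta_{i-1}$ along the remaining $B$-generalized eigenspaces; being a polynomial in $B$, it commutes with $B$. The hypotheses are exactly $\theta_i\not=\theta_{i-1}$, together with $\theta_{i-1}\not=\theta_{i+1}$ (from $\theta_i\not=\pm2$ via Lemma \ref{lem:theta^2}) and $\theta_i\not=\theta_{i-2}$, the last of which will be the decisive one.

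First I would record the structural consequence of Proposition \ref{prop:EKT2}(ii): for any $w\in V^{(2)}(\theta_i)$ the vanishing of $T_i^{(2)}$ forces $Aw$ to lie in $V^{(2)}(\theta_{i-1})+V^{(2)}(\theta_i)+V^{(2)}(\theta_{i+1})$, so that $E_i^{(2)}w=(B-\theta_i)^2(B-\theta_{i+1})^2Aw$ annihilates the $\theta_i$- and $\theta_{i+1}$-components and acts on the $\theta_{i-1}$-component $P_{i-1}(Aw)$ by $(B-\theta_i)^2(B-\theta_{i+1})^2$, which is invertible on $V^{(2)}(\theta_{i-1})$ because $\theta_{i-1}\notin\{\theta_i,\theta_{i+1}\}$. (The only coincidence allowed among $\theta_{i-1},\theta_i,\theta_{i+1}$ is $\theta_i=\theta_{i+1}$, and one checks the same conclusion survives there.) Hence
\[
E_i^{(2)}w=0 \quad\Longleftrightarrow\quad P_{i-1}(Aw)=0 \qquad (w\in V^{(2)}(\theta_i)),
\]
and the identical criterion holds verbatim on $V(\theta_i)$.

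Now take $v\in V^{(2)}(\theta_i)$ with $E_i^{(2)}v=0$, so $P_{i-1}(Av)=0$, and set $v'=(B-\theta_i)v\in V(\theta_i)$. The decisive step is to invoke Lemma \ref{lem:KV(2)-MV}: the operator $K_i-((q^2+q^{-2})B-2\theta_i)A(B-\theta_i)$ preserves $V^{(2)}(\theta_i)$, so applying $P_{i-1}$ (which annihilates $V^{(2)}(\theta_i)$ since $\theta_{i-1}\not=\theta_i$) kills its value at $v$ and yields
\[
P_{i-1}(K_iv)=P_{i-1}\bigl(((q^2+q^{-2})B-2\theta_i)Av'\bigr).
\]
On the left $K_iv=(B-\theta_{i-1})(B-\theta_{i+1})Av$ together with $P_{i-1}(Av)=0$ gives $0$. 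On the right $P_{i-1}(Av')\in V(\theta_{i-1})$ by Lemma \ref{lem:A}, on which $B$ acts as $\theta_{i-1}$, so the right side equals $((q^2+q^{-2})\theta_{i-1}-2\theta_i)P_{i-1}(Av')$. By Lemma \ref{lem:theta}(i) the scalar equals $\theta_{i-2}-\theta_i$, which is nonzero precisely because $\theta_i\not=\theta_{i-2}$; therefore $P_{i-1}(Av')=0$.

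Finally, $P_{i-1}(Av')=0$ with $v'\in V(\theta_i)$ gives $E_i^{(2)}v'=0$ by the criterion above, whence $v'=0$ by Lemma \ref{lem:cyclic_E(2)V(1)}, whose hypotheses $\theta_i\notin\{\theta_{i-1},\pm2\}$ are part of ours. Thus $(B-\theta_i)v=0$, i.e.\ $v\in V(\theta_i)$, and a second application of Lemma \ref{lem:cyclic_E(2)V(1)} forces $v=0$, proving injectivity. I expect the main obstacle to be the third paragraph: recognizing that Lemma \ref{lem:KV(2)-MV} is exactly the relation which, after projecting onto the $\theta_{i-1}$-generalized eigenspace, converts the hypothesis $P_{i-1}(Av)=0$ into $P_{i-1}(Av')=0$ at the cost of the factor $\theta_{i-2}-\theta_i$. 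This is the point at which the otherwise opaque condition $\theta_i\not=\theta_{i-2}$ is consumed, and everything else is bookkeeping with the $B$-generalized eigenspace decomposition.
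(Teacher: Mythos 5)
Your proof is correct and takes essentially the same route as the paper: both arguments hinge on Lemma \ref{lem:KV(2)-MV}, isolate the $\theta_{i-1}$-component of $A(B-\theta_i)v$ (you via the spectral projection $P_{i-1}$, the paper by applying $(B-\theta_{i+1})(B-\theta_i)^2$ to the inclusion), extract the same nonzero scalar $(q^2+q^{-2})\theta_{i-1}-2\theta_i=\theta_{i-2}-\theta_i$, and then reduce to the eigenspace case handled by Lemma \ref{lem:cyclic_E(2)V(1)}. The only cosmetic difference is at the end, where the paper concludes $(B-\theta_i)v=0$ directly from the injectivity of $E_i|_{V(\theta_i)}$, whereas you pass through your projection criterion and a second appeal to Lemma \ref{lem:cyclic_E(2)V(1)}.
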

\begin{proof}
 Suppose that $v$ lies in the kernel of $E^{(2)}_i$ in ${V^{(2)}(\theta_i)}$. In light of Lemma \ref{lem:cyclic_E(2)V(1)} it suffices to show that $v\in V(\theta_i)$. It follows from Lemma \ref{lem:KV(2)-MV} that 
\begin{gather}\label{e:Gv}
K_i v
\in ((q^2+q^{-2})B-2\theta_i)A(B-\theta_i)v+V^{(2)}(\theta_i).
\end{gather}
Applying $(B-\theta_{i+1})(B-\theta_i)^2$ to either side of (\ref{e:Gv}) yields that 
\begin{gather}\label{AE^2v}
(B-\theta_{i-1}) E^{(2)}_i v=((q^2+q^{-2})B-2\theta_i) (B-\theta_i) E_i(B-\theta_i)v.
\end{gather}
Since $E^{(2)}_i v=0$ the left-hand side of (\ref{AE^2v}) is zero. 
Since $(B-\theta_i)v\in V(\theta_i)$ it follows from Proposition \ref{prop:EKT}(iii) that 
$$
E_i(B-\theta_i)v\in V(\theta_{i-1}).
$$
Hence the right-hand side of (\ref{AE^2v}) is equal to the scalar multiple of $E_i(B-\theta_i)v$ by
\begin{gather}\label{e:scalar2}
((q^2+q^{-2})\theta_{i-1}-2\theta_i) (\theta_{i-1}-\theta_i).
\end{gather}
Since $\theta_i\not=\theta_{i-2}$ it follows from Lemma \ref{lem:theta^2} that $(q^2+q^{-2})\theta_{i-1}\not=2\theta_i$. Combined with $\theta_i\not=\theta_{i-1}$ the scalar (\ref{e:scalar2}) is nonzero. Therefore
\begin{gather*}\label{e:vinV(1)}
E_i(B-\theta_i)v=0.
\end{gather*}
Since $E_i|_{V(\theta_i)}$ is injective it follows that  
$
v\in V(\theta_i)$. 
The lemma follows.
\end{proof}

\begin{lem}\label{lem:cyclic_dimV(2)}
Suppose that   
$
E_i|_{V(\theta_i)}
$
is injective for each $i\in \Z/\dbar \Z$. Then the following hold:
\begin{enumerate}
\item If $\{\theta_i\}_{i\in \Z/\dbar \Z}$ is of type $O(2)$ or $O(-2)$ then 
$$
V^{(2)}(\theta_i)
\qquad 
\hbox{for all $i=1,2,\ldots,\displaystyle\frac{\dbar-1}{2}$}
$$ 
are of the same dimension.

\item If $\{\theta_i\}_{i\in \Z/\dbar \Z}$ is of type $E(2)$ then 
$$
V^{(2)}(\theta_i)
\qquad  
\hbox{for all $\displaystyle i=1,2,\ldots,\frac{\dbar}{2}-1$}
$$ 
are of the same dimension.

\item If $\{\theta_i\}_{i\in \Z/\dbar \Z}$ is of type $E(q+q^{-1})$ then 
$$
V^{(2)}(\theta_i) 
\qquad 
\hbox{for all $\displaystyle i=1,2,\ldots,\frac{\dbar}{2}$}
$$ 
are of the same dimension.
\end{enumerate}
\end{lem}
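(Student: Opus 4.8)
The plan is to imitate the argument of Lemma \ref{lem:cyclic_V(1)}, but now with the operators $E_i^{(2)}$ in place of $E_i$ and the generalized eigenspaces $V^{(2)}(\theta_i)$ in place of $V(\theta_i)$. The engine is Proposition \ref{prop:EKT2}(iii), giving $E_i^{(2)}V^{(2)}(\theta_i)\subseteq V^{(2)}(\theta_{i-1})$, together with Lemma \ref{lem:cyclic_E(2)}, which upgrades the hypothesized injectivity of $E_i|_{V(\theta_i)}$ to injectivity of $E_i^{(2)}|_{V^{(2)}(\theta_i)}$ whenever $\theta_i\notin\{\theta_{i-2},\theta_{i-1},\pm2\}$. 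Each such injective $E_i^{(2)}$ yields one inequality $\dim V^{(2)}(\theta_i)\le\dim V^{(2)}(\theta_{i-1})$.

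The essential difficulty, distinguishing this from Lemma \ref{lem:cyclic_V(1)}, is that for the four non-$D$ types the sequence $\{\theta_i\}$ folds: by Proposition \ref{prop:multi_DEO} each value of multiplicity two is repeated, so the distinct spaces $V^{(2)}(\theta_i)$ are indexed only by the fundamental domain listed in each case, not by all of $\Z/\dbar\Z$. Hence the injections $E_i^{(2)}$ run along a half-range and a single pass produces only a weakly monotone chain of inequalities; it cannot close the loop as in Lemma \ref{lem:cyclic_V(1)}.

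To carry this out I would treat each type separately. First I record the forward chain: using Definition \ref{defn:DEO} one checks that $\theta_i=\pm2$ occurs exactly at the multiplicity-one eigenvalues (at $\theta_0$, and also at $\theta_{\dbar/2}$ for type $E(2)$), and that $\theta_i\ne\theta_{i-1}$, $\theta_i\ne\theta_{i-2}$ hold for all interior $i$ except right at the fold; thus Lemma \ref{lem:cyclic_E(2)} applies along the listed range and gives a weakly decreasing chain $\dim V^{(2)}(\theta_i)\le\dim V^{(2)}(\theta_{i-1})$. I would then produce the opposite inequalities via the reflection symmetry of the sequence: $\theta_{-i}=\theta_i$ for types $O(2),O(-2),E(2)$ and $\theta_{1-i}=\theta_i$ for type $E(q+q^{-1})$ (Proposition \ref{prop:multi_DEO}(ii),(iii)). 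Since these identify $V^{(2)}(\theta_{-i})=V^{(2)}(\theta_i)$ (resp. $V^{(2)}(\theta_{1-i})=V^{(2)}(\theta_i)$), applying Proposition \ref{prop:EKT2}(iii) and Lemma \ref{lem:cyclic_E(2)} at the reflected index $-i$ (resp. $1-i$) furnishes an injection $V^{(2)}(\theta_i)\hookrightarrow V^{(2)}(\theta_{i+1})$, hence $\dim V^{(2)}(\theta_i)\le\dim V^{(2)}(\theta_{i+1})$. On each consecutive pair of the fundamental domain the forward and reverse chains give opposite inequalities, so the listed dimensions are all equal.

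The main obstacle I anticipate is purely the index bookkeeping at the fold and the endpoints: pinning down precisely which $i$ satisfy $\theta_i\notin\{\theta_{i-2},\theta_{i-1},\pm2\}$ and the reflected counterpart $\theta_i\notin\{\theta_{i+2},\theta_{i+1},\pm2\}$, and confirming that the two chains jointly cover every consecutive pair so that no dimension is left unpinned. This is where the parity of $\dbar$ and the exact location of the values $\pm2$ must be used, and it is the only place the cases genuinely differ; the linear-algebra content beyond that is the one-line rank argument already used above.
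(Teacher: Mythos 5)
Your proposal is correct and is essentially the paper's own proof: both arguments use Proposition \ref{prop:EKT2}(iii) together with Lemma \ref{lem:cyclic_E(2)} to obtain forward chains of inequalities $\dim V^{(2)}(\theta_i)\le\dim V^{(2)}(\theta_{i-1})$ on the two halves of $\Z/\dbar\Z$, and then invoke the reflection symmetry of Proposition \ref{prop:multi_DEO}(ii),(iii) to identify the second chain with the reverse chain on the fundamental domain, forcing equality (your ``reverse chain at reflected indices'' is exactly the paper's chains (\ref{ineq:iv-2}), (\ref{ineq:ii-2}), (\ref{ineq:iii-2}) after re-indexing). The bookkeeping you defer---locating $\pm2$ at the multiplicity-one values and the failures of $\theta_i\ne\theta_{i-1},\theta_{i-2}$ near the folds---works out precisely as you anticipate and is exactly what the paper's case analysis records.
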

\begin{proof}
By Proposition \ref{prop:EKT2}(iii) we have the map 
\begin{gather}\label{E2}
E^{(2)}_i|_{V^{(2)}(\theta_i)}:V^{(2)}(\theta_i)\to V^{(2)}(\theta_{i-1})
\end{gather}
for each $i\in \Z/\dbar \Z$. 

(i): By Definition \ref{defn:DEO}(ii), (iii) the scalar $\theta_i\in \{\pm 2\}$ if and only if $i=0$. Combined with Lemma \ref{lem:theta^2} this yields that $\theta_i=\theta_{i-2}$ if and only if $i=1$. It follows from Proposition \ref{prop:multi_DEO}(ii) that $\theta_i=\theta_{i-1}$ if and only if  $i=\frac{\dbar+1}{2}$. Therefore the map (\ref{E2}) is injective for each  
$i\in \Z/\dbar\Z\setminus\{0,1,\frac{\dbar+1}{2}\}$ by Lemma \ref{lem:cyclic_E(2)}.
It follows that  
\begin{align} 
&\dim V^{(2)}(\theta_{\frac{\dbar-1}{2}})\leq \dim V^{(2)}(\theta_{\frac{\dbar-3}{2}})\leq \ldots \leq \dim V^{(2)}(\theta_1);
\label{ineq:iv-1}
\\
&\dim V^{(2)}(\theta_{\dbar-1})
\leq 
\dim V^{(2)}(\theta_{\dbar-2})
\leq 
\ldots
\leq 
\dim V^{(2)}(\theta_{\frac{\dbar+1}{2}}). 
\label{ineq:iv-2}
\end{align}
By Proposition \ref{prop:multi_DEO}(ii) we have $\theta_i=\theta_{\dbar-i}$ for $i=1,2,\ldots,\frac{\dbar-1}{2}$.
Combined with
(\ref{ineq:iv-1}) and (\ref{ineq:iv-2}) this yields (i).

(ii): By Definition \ref{defn:DEO}(iv) the scalar $\theta_i\in \{\pm 2\}$ if and only if $i\in\{0,\frac{\dbar}{2}\}$. Combined with Lemma \ref{lem:theta^2} this yields that  $\theta_i=\theta_{i-2}$ if and only if $i\in\{1,\frac{\dbar}{2}+1\}$. By Proposition \ref{prop:multi_DEO}(ii) none of $i\in \Z/\dbar \Z$ satisfies $\theta_i=\theta_{i-1}$.
Therefore (\ref{E2}) is injective for each $i\in \Z/\dbar \Z\setminus\{0,1,\frac{\dbar}{2},\frac{\dbar}{2}+1\}$ by Lemma \ref{lem:cyclic_E(2)}. It follows that 
\begin{align}
&\dim V^{(2)}(\theta_{\frac{\dbar}{2}-1})\leq \dim V^{(2)}(\theta_{\frac{\dbar}{2}-2})\leq \ldots \leq \dim V^{(2)}(\theta_1);
\label{ineq:ii-1}
\\
&\dim V^{(2)}(\theta_{\dbar-1})
\leq 
\dim V^{(2)}(\theta_{\dbar-2})
\leq 
\ldots
\leq 
\dim V^{(2)}(\theta_{\frac{\dbar}{2}+1}). 
\label{ineq:ii-2}
\end{align}
By Proposition \ref{prop:multi_DEO}(ii) we have $\theta_i=\theta_{\dbar-i}$ for all $i=1,2,\ldots,\frac{\dbar}{2}-1$.
Combined with (\ref{ineq:ii-1}) and (\ref{ineq:ii-2}) this yields (ii).

(iii): By Definition \ref{defn:DEO}(v) none of the scalars $\{\theta_i\}_{i\in \Z/\dbar \Z}$ is in $\{\pm 2\}$. Combined with Lemma \ref{lem:theta^2} none of $\{\theta_i\}_{i\in \Z/\dbar \Z}$ satisfies $\theta_i=\theta_{i-2}$. Using Proposition \ref{prop:multi_DEO}(iii) yields that $\theta_i=\theta_{i-1}$ if and only if $i\in\{1,\frac{\dbar}{2}+1\}$.
Therefore (\ref{E2}) is injective for each 
$i\in \Z/\dbar\Z\setminus\{1,\frac{\dbar}{2}+1\}$ by Lemma \ref{lem:cyclic_E(2)}.
It follows that 
\begin{align} 
&\dim V^{(2)}(\theta_{\frac{\dbar}{2}})\leq \dim V^{(2)}(\theta_{\frac{\dbar}{2}-1})\leq \ldots \leq \dim V^{(2)}(\theta_1);
\label{ineq:iii-1}
\\
&\dim V^{(2)}(\theta_{\dbar})
\leq 
\dim V^{(2)}(\theta_{\dbar-1})
\leq 
\ldots
\leq 
\dim V^{(2)}(\theta_{\frac{\dbar}{2}+1}). 
\label{ineq:iii-2}
\end{align}
By Proposition \ref{prop:multi_DEO}(iii) we have $\theta_i=\theta_{\dbar+1-i}$ for all $i=1,2,\ldots,\frac{\dbar}{2}$. 
Combined with 
(\ref{ineq:iii-1}) and (\ref{ineq:iii-2}) this implies (iii).
\end{proof}

\begin{lem}\label{lem:dimV=dV(1)}
If
$
E_i|_{V(\theta_i)}
$
is injective for each $i\in \Z/\dbar \Z$ then 
\begin{gather}\label{dimV=dV(1)}
\dim V=\dbar\cdot \dim V(\theta_1).
\end{gather} 
\end{lem}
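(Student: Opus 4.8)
The plan is to split into the five types of $q$-Racah sequences and evaluate the direct-sum decomposition of $V$ supplied by Proposition \ref{prop:decomposition} in each case, using the equidimensionality results already established under the standing hypothesis. Since the congruence relation merely shifts the indices of $\{\theta_i\}_{i\in \Z/\dbar \Z}$ and alters neither the decomposition nor the quantity $\dim V$, Proposition \ref{prop:congruence} lets me assume without loss of generality that $\{\theta_i\}_{i\in \Z/\dbar \Z}$ is exactly of type $D$, $O(2)$, $O(-2)$, $E(2)$, or $E(q+q^{-1})$; moreover, since Lemma \ref{lem:cyclic_V(1)} forces every $V(\theta_i)$ to have the same dimension $n:=\dim V(\theta_1)$, this relabeling does not affect the target formula $\dim V=\dbar\cdot n$.

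First I would dispose of type $D$: here Proposition \ref{prop:decomposition}(i) gives $V=\bigoplus_{i=0}^{\dbar-1} V(\theta_i)$, so $\dim V=\dbar\cdot n$ at once. For the four non-$D$ types the two key inputs are $\dim V^{(2)}(\theta_1)=2n$ (Lemma \ref{lem:cyclic_dimV(2)=2dimV(1)}) together with the fact that all the generalized eigenspaces $V^{(2)}(\theta_i)$ occurring in the decomposition share this common value $2n$ (Lemma \ref{lem:cyclic_dimV(2)}). It then remains only to add up the summands in Proposition \ref{prop:decomposition}(ii)--(iv): for type $O(2)$ or $O(-2)$ one obtains $n+\tfrac{\dbar-1}{2}\cdot 2n=\dbar\cdot n$; for type $E(2)$ one obtains $n+(\tfrac{\dbar}{2}-1)\cdot 2n+n=\dbar\cdot n$; and for type $E(q+q^{-1})$ one obtains $\tfrac{\dbar}{2}\cdot 2n=\dbar\cdot n$. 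Since every case yields $\dim V=\dbar\cdot\dim V(\theta_1)$, the lemma follows.

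The argument is essentially bookkeeping once the equidimensionality lemmas are in hand, so there is no serious obstacle; the one point that warrants care is verifying that in each non-$D$ type the index $1$ genuinely labels a doubled eigenvalue, so that $V^{(2)}(\theta_1)$ is one of the rank-two summands and Lemma \ref{lem:cyclic_dimV(2)=2dimV(1)} applies as stated. This is exactly what Proposition \ref{prop:multi_DEO} guarantees: for types $O(2)$, $O(-2)$, and $E(2)$ one has $\theta_{-1}=\theta_1$, while for type $E(q+q^{-1})$ one has $\theta_0=\theta_1$, matching the multiplicity-two summands recorded in Proposition \ref{prop:decomposition}.
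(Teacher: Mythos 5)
Your proof is correct and follows essentially the same route as the paper: case analysis by type, combining Proposition \ref{prop:decomposition} with Lemmas \ref{lem:cyclic_V(1)}, \ref{lem:cyclic_dimV(2)=2dimV(1)}, and \ref{lem:cyclic_dimV(2)}. The only differences are cosmetic — you write out the arithmetic for the $E(2)$ and $E(q+q^{-1})$ cases that the paper dismisses as ``similar,'' and you make explicit the congruence-relabeling step and the check that $\theta_1$ is a doubled eigenvalue, both of which the paper leaves implicit.
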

\begin{proof}
To see this we break the argument into four cases: (i) $\{\theta_i\}_{i\in \Z/\dbar \Z}$ is of type $D$; (ii) $\{\theta_i\}_{i\in \Z/\dbar \Z}$ is of type $O(2)$ or $O(-2)$; (iii) $\{\theta_i\}_{i\in \Z/\dbar \Z}$ is of type $E(2)$; (iv) $\{\theta_i\}_{i\in \Z/\dbar \Z}$ is of type $E(q+q^{-1})$.

(i): It follows from Proposition \ref{prop:decomposition}(i) that 
$$
\dim V=\sum_{i\in \Z/\dbar\Z}\dim V(\theta_i).
$$
Combined with Lemma \ref{lem:cyclic_V(1)} this yields (\ref{dimV=dV(1)}).

(ii): It follows from Proposition \ref{prop:decomposition}(ii) that 
$$
\dim V=\dim V(\theta_0)+\sum_{i=1}^{\frac{\dbar-1}{2}}\dim V^{(2)}(\theta_i).
$$
Combined with Lemmas \ref{lem:cyclic_V(1)} and \ref{lem:cyclic_dimV(2)=2dimV(1)} along with Lemma \ref{lem:cyclic_dimV(2)}(i), this yields (\ref{dimV=dV(1)}).

(iii), (iv): Similar to the proof of (ii). 
\end{proof}

\begin{thm}\label{thm:dim_cyclic}
If
$
E_i|_{V(\theta_i)}
$
is injective for each $i\in \Z/\dbar \Z$ then 
$V$ is of dimension $\dbar$.
\end{thm}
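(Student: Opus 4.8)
The plan is to combine the two quantitative facts already established in \S\ref{s:proof} and \S\ref{s:result}. Under the standing injectivity hypothesis, Lemma \ref{lem:dimV=dV(1)} pins down the dimension exactly as $\dim V=\dbar\cdot\dim V(\theta_1)$. Independently, Proposition \ref{prop:<2d} gives the a priori upper bound $\dim V\leq\sqrt{3\dbar^2-3\dbar+1}$. Substituting the first relation into the second yields $\dbar\cdot\dim V(\theta_1)\leq\sqrt{3\dbar^2-3\dbar+1}$, and the whole theorem will follow once I show that the layer dimension $\dim V(\theta_1)$ equals $1$.

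First I would check that $V(\theta_1)\neq\{0\}$. The $q$-Racah sequence $\{\theta_i\}_{i\in\Z/\dbar\Z}$ was chosen precisely so that $V(\theta_i)\neq\{0\}$ for at least one index $i$; and under the injectivity hypothesis Lemma \ref{lem:cyclic_V(1)} shows that all the spaces $V(\theta_i)$ share one common dimension. Hence that common dimension is positive, so $\dim V(\theta_1)\geq 1$ (note $\dbar\geq 3$ under the running hypotheses on $d$, so the index $1$ is legitimate).

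Next I would rule out $\dim V(\theta_1)\geq 2$. If this held, then $\dim V\geq 2\dbar$, and squaring the displayed inequality would force $4\dbar^2\leq 3\dbar^2-3\dbar+1$, that is $\dbar^2+3\dbar-1\leq 0$; since $\dbar\geq 3$ the left side is strictly positive, a contradiction. Therefore $\dim V(\theta_1)=1$, and Lemma \ref{lem:dimV=dV(1)} gives $\dim V=\dbar$ exactly.

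I do not expect any genuine obstacle at this stage: all of the representation-theoretic content has already been discharged in the preceding lemmas, which constrain $\dim V$ to be the exact multiple $\dbar\cdot\dim V(\theta_1)$ of a single positive layer dimension. The remaining work is the elementary numerical comparison above, whose only point requiring care is the confirmation that $V(\theta_1)$ is nonzero, so that the multiple cannot degenerate and the two bounds squeeze $\dim V(\theta_1)$ down to $1$.
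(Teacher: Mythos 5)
Your proposal is correct and follows essentially the same route as the paper: both combine Proposition \ref{prop:<2d} with Lemma \ref{lem:dimV=dV(1)} to squeeze $\dim V(\theta_1)$ down to $1$, the paper doing so via the one-line estimate $\sqrt{3\dbar^2-3\dbar+1}<2\dbar$ and you via an equivalent contradiction argument. Your explicit verification that $V(\theta_1)\neq\{0\}$ (using Lemma \ref{lem:cyclic_V(1)} and the choice of the $q$-Racah sequence) is a point the paper leaves implicit, but the substance is identical.
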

\begin{proof}
Since $3n^2-3n+1<4n^2$ for all $n\geq 1$ it follows from Proposition \ref{prop:<2d} that 
$\dim V<2 \dbar$. Combined with Lemma \ref{lem:dimV=dV(1)} this forces that $\dim V=\dbar$. The result follows.
\end{proof}

\noindent {\it Proof of Theorem \ref{thm:dimension}.} Combine Theorems \ref{thm:dim_bidiag} and \ref{thm:dim_cyclic}.

\subsection*{Acknowledgements}
The research is supported by the Ministry of Science and Technology of Taiwan under the project MOST 106-2628-M-008-001-MY4. Part of the research was done when the author was visiting International Christian University from January 16 to February 2 in 2018. He would like to thank Prof. Hiroshi Suzuki for his hospitality.

\bibliographystyle{amsplain}
\bibliography{MP}

\end{document}